\newtheorem{theorem}{Theorem}
\newtheorem{example}[theorem]{Example}
\newtheorem{lemma}[theorem]{Lemma}
\newtheorem{proposition}[theorem]{Proposition}
\newtheorem{remark}[theorem]{Remark}
\newenvironment{proof}[1][Proof]{\noindent \textbf{#1.} }{\  \rule{0.5em}{0.5em}}
\begin{document}

\title{Spatial competition with unit-demand functions\thanks{%
We thank Helmuth Cremer for helpful discussions about earlier drafts of this
paper. Van der Straeten acknowledges funding from the French National
Research Agency (ANR) under the Investments for the Future (Investissements
d'Avenir) program, grant ANR-17-EURE-0010.}}
\author{Ga\"{e}tan\textsc{\ Fournier}\thanks{%
Aix-Marseille Univ., CNRS, EHESS, Centrale Marseille, AMSE: gaetan.fournier@univ-amu.fr}\textsc{, }Karine 
\textsc{Van der Straeten}\thanks{%
CNRS, Toulouse School of Economics \& Institute for Advanced Study in
Toulouse: karine.van-der-straeten@tse-fr.eu}, \& J\"{o}rgen W. \textsc{Weibull}\thanks{%
Stockholm School of Economics: jorgen.weibull@hhs.se} }
\maketitle

\begin{abstract}
This paper studies a spatial competition game between two firms that sell a
homogeneous good at some pre-determined fixed price. A population of
consumers is spread out over the real line, and the two firms simultaneously
choose location in this same space. When buying from one of the
firms, consumers incur the fixed price plus some transportation
costs, which are increasing with their distance to the firm. Under the
assumption that each consumer is ready to buy one unit of the good whatever
the locations of the firms, firms converge to the median location: there is
\textquotedblleft minimal differentiation\textquotedblright . In this
article, we relax this assumption and assume that there is an upper
limit to the distance a consumer is ready to cover to buy the good. We show
that the game always has at least one Nash equilibrium in pure strategy.
Under this more general assumption, the \textquotedblleft minimal
differentiation principle\textquotedblright ~no longer holds in general. At
equilibrium, firms choose \textquotedblleft minimal\textquotedblright ,
\textquotedblleft intermediate\textquotedblright ~or \textquotedblleft
full\textquotedblright ~differentiation, depending on this critical distance
a consumer is ready to cover and on the shape of the distribution of
consumers' locations.
\end{abstract}

\noindent \textbf{Keywords:} Spatial competition games, horizontal differentiation, willingness to pay

\section{Introduction}
The choice of product characteristics - and strategic product
differentiation in particular - is a central issue in Industrial
Organization.

A large number of studies on this topic build on Hotelling's\ seminal model
of firm location (\cite{hotelling1929stability}). In Hotelling's model,
consumers are uniformly distributed on a line. Two firms selling an
homogeneous good simultaneously and non-cooperatively choose a location on
this line (stage 1). Once locations are observed, firms simultaneously
choose a price at which they sell the good (stage 2). Consumers are ready to
buy exactly one unit of the good (whatever the prices and the locations).
They incur linear transportation costs when traveling on the line to
purchase the good. Hotelling claims that this two-stage game has a unique
Nash equilibrium, where both firms choose the same location at stage 1 -
hence the name of \textquotedblleft minimal differentiation
principle\textquotedblright \ given to Hotelling's result.

Note that the game is framed here in geographical terms, but there is an
immediate analogy with a situation where firms, instead of a geographical
location, choose some characteristics of their products in some space
product \`{a} la Lancaster, and consumers differ in their preferences for
product characteristics. In this interpretation, the counter-part of the
transportation cost is the utility loss suffered by a consumer who consumes
a product whose characteristics do not exactly match her preferred ones. In
the paper, we will use the geography terminology, and talk about firms'
\textquotedblleft positions\textquotedblright ~or \textquotedblleft locations\textquotedblright, but all the results can also be equally
interpreted in terms of more general product characteristics. \newline

\cite{d1979hotelling} challenge Hotelling's convergence result,
demonstrating that there is a flaw in the resolution of the price subgame
stage: the price subgame has no equilibrium in pure strategies when firms
are too close one from this other. Assuming that consumers have quadratic
transportation costs (instead of linear as in the original article), they
show that the price subgame always has a pure strategy equilibrium. They
show that in that case, a \textquotedblleft maximum
differentiation\textquotedblright \ principle holds, firms locating at the
two ends of the line. The powerful intuition behind this result is that
firms differentiate to avoid too fierce a price competition at the second
stage.\footnote{%
See also \cite{economides1986minimal}, \cite{osborne1987equilibrium} and 
\cite{bester1996noncooperative} for further discussion on Hotelling's result.%
} As summarized by \cite{tirole1988theory}, this two-stage model where firms
first choose product characteristics and then choose prices offers the
standard explanation in industrial organization as to why \textquotedblleft(...) \textit{%
firms generally do not want to locate at the same place in the product
space. The reason is simply the Bertrand paradox: Two firms producing
perfect substitutes face unbridled price competition (at least in a static
framework). In contrast, product differentiation establishes clienteles
(\textquotedblleft market niches\textquotedblright , in the business
terminology) and allows firms to enjoy some market power over these
clienteles. Thus, firms usually wish to differentiate themselves from other
firms}\textquotedblright \ (\cite{tirole1988theory}, page 278).\footnote{%
A few papers have also analyzed how uncertainty about demand impacts this
incentive to differente, such as \cite{de1985principle} and \cite%
{meagher2004product}.}

\bigskip

In the present paper, Hotelling's convergence result is also challenged, but
on completely different grounds. We argue that softening the price
competition is not the only force which may drive firms apart. We do so by
relaxing Hotelling's assumption that the market is always covered, whatever
the locations and prices of the firms. Instead of assuming perfectly
inelastic demand, we assume unit demand funtions: a consumer buys the good
only if her valuation for the good is higher than the total cost, where the
total cost is the price of the good augmented by the transporation cost. If
for both firms, the total cost is lower than her valuation, the consumer
buys from the firm with the lowest total cost (and randomizes equally
between the two firms in case of equality). Under this more general
assumption, if both firms are too far away from her location, a consumer
might prefer not to buy the good. We will show that introducing this option
to abstain/stay out of the market, can be a powerful force in favor of
differentiation.

In order to make this argument as transparent as possible, we study a
one-stage location game, where firms are assumed to sell the good at some
pre-determined fixed price. This will allow us to clearly distinguish our
effect from the one driven by the price competition.

\bigskip

Note that this fixed-price situation is interesting \textit{per se}, since
there are many situations where, for legal or technical reasons, price is
not a free parameter in the competition. As noted by \cite{tirole1988theory}%
, \textquotedblleft \textit{There may exist legal or technical reasons why
the scope of price competition is limited. For instance, the prices of
airline tickets in the United States (before deregulation) where determined
exogenously, as the price of gas and books in France once were}%
\textquotedblright \ (page 287). Some shops sell products whose price is
exogenously determined, for instance newsstands, pharmacies, or franchises
of brand clothes for example.\footnote{%
In the survey by Gabszewicz and Thisse (1992) of the early literature on
spatial competition, Section 4 (pages 298-302) is devoted to this
fixed-price situation.}

\bigskip

In this fixed-price spatial competition, if the demand is assumed to be
perfectly inelastic, both firms choose the \textquotedblleft median
consumer\textquotedblright \ location, i.e. the location such that one half
of the consumers lay on its left-hand side, and the other half lay on its
right-hand side. This results holds whatever the form of the transportation
costs and the distribution of consumers.\footnote{%
This result has been known in political economy under the name of
\textquotedblleft Median Voter theorem\textquotedblright (see \cite%
{black1948rationale}, \cite{downs1957economic}). In this interpretation, two
political parties compete to attract voters, who are located along an
\textquotedblleft ideological\textquotedblright left-right axis. If parties
seek to attract as many voters as possible, they will at equilibrium both
choose the median location. Note that this analogy was already noted by
Hotelling in his seminal article, where he writes: \textquotedblleft \textit{%
So general is this [agglomerative] tendency that it appears in the most
diverse fields of competitive activity, even quite apart from what is called
economic life. In politics it is strikingly exemplified. The competition for
votes between the Republican and Democratic parties does not lead to a clear
drawing of issues, an adoption of two strongly contrasted positions between
which the voter may choose. Instead, each party strives to make its platform
as much like the other's as possible.}\textquotedblright (page 54)} The
intuition behind this convergence result is quite powerful. Consider any
situation where the firms choose different locations. Then both firms could
increase their profit by moving closer to their opponent. Indeed, with such
a move, each firm would win additional consumers (among those initially
located between the two firms), without losing any consumers on the other
side. This shows that at any equilibrium in pure strategies, the firms
should converge.\footnote{%
Note that the result crucially depends on the duopoly assumption: with more
than two firms, the situation in which all firms converge at the median is
no longer an equilibrium. Indeed, consider a pure location game with $n\geq
2 $ firms. If all firms locate at the median, each gets a share $1/n$ \ of
the consumers. By moving slightly to left or the right, a deviating firm
could attract almost $1/2$ of the consumers. This simple argument shows that
convergence of all firms at the median cannot be an equilibrium at soon at $%
n\geq 3$. In such a game, it has been shown that an equilibrium in pure
strategy exists only under very restricted assumptions about the
distribution of consumers, and when it does, firsm do not all converge, see
for example Eaton and Lipsey [1975] and \cite{fournier2016general} . Peters
et al. [2018] study a pure location model with congestion where consumers
are uniformly distributed and each consumer selects one of the firms based
on distances as well as the number of consumers visiting each firm. They
provide conditions for the existence of subgame perfect Nash equilibrium,
and show that firms do not converge when the number of firms is larger than
two.}\newline

If instead there exists a maximum distance that consumers are ready to
travel to buy the good, we show that the convergence result may not hold
anymore. One may observe some \textquotedblleft
intermediate\textquotedblright ~or even \textquotedblleft full
differentiation\textquotedblright . To be more precise about what we mean by
partial or full differentiation, we define the \textquotedblleft potential
attraction zone\textquotedblright ~of a firm as the set of consumers who
prefer buying from this firm rather than not buying the good at all. We say
that there is \textquotedblleft partial differentiation\textquotedblright
~when firms choose different locations but their potential attraction zones\
intersect; and that there is \textquotedblleft full
differentiation\textquotedblright ~when the two potential attraction zones do
not intersect (or intersect over of set of consumers of measure $0$). We
characterize all pure strategy equilibria, and discuss their properties
under quite general assumptions about the transportation cost functions and
the distribution of consumers. Assuming mild assumptions on the distribution
of consumers\footnote{%
We assume that the distribution is single-peaked, symmetric and that it has
a continuous log-concave density. Most standard distributions satisfy these
assumptions.}, our results are the following. If the maximum traveling
distance is high enough, both firms converge to the median/modal location%
\footnote{%
Since we assume that $f$ is single-peaked and symmetric, the median and
modal location coincide.} (the standard convergence result). Now, if this
distance is small enough, firms diverge at equilibrium. To understand the
main intuition behind this result, suppose that a firm has chosen the
median/modal position. In that case, if its opponent also selects this
central position, the two firms will have exactly the same potential
attraction zones, and thus each will attract one half of the consumers who
are located within acceptable distance of the central position. The latter
firm may fare better in that case by avoiding this frontal competition, and
moving somewhat to the left or the right. In doing so, it might win new
consumers located \textquotedblleft at the periphery\textquotedblright, although it will come at the cost of
losing some \textquotedblleft central\textquotedblright ~consumers. We expect the incentives to move away to be
greater when the distribution is flatter (less concentration at the modal
position) and when the width of the attraction zone is larger. It will be
shown to be indeed the case. Depending of the width of the attraction zone
compared to some indicator of the flatness of the distribution of consumers'
location, we can observe full convergence to the central position,
intermediate differentiation, or complete differentiation (in the sense that
no consumer is located at equilibrium within acceptable distance of both
firms). In particular, some necessary and sufficient conditions on this
ratio are provided for the convergence result to hold.

\bigskip

We are not the first to revisit Hotelling's assumption of perfectly
inelastic demand. Early contributions by \cite{lerner1937some} and \cite%
{smithies1941optimum} note the centrifugal forces that a more elastic demand
may generate. \cite{economides1984principle} study a two-stage
location-then-price Hotelling game where consumers have a finite valuation
for the good. In that case, even with linear costs of transportation, a
price equilibrium may exist at the second stage, and in the first stage,
firms may differentiate.\footnote{%
However, because a price equilibrium does not exist for every pair of
locations, a complete analysis of the spatial competition is impossible. The
paper focuses on local firms' deviations.} Imperfectly inelastic demand in a
pure location game has been studied by \cite{feldman2016variations} and \cite%
{shen2017hotelling}, who consider a model where each seller has an interval
of attraction, as it is the case in our model, but they suppose that
consumers randomly select where to buy among attractive sellers. Contrary to
our assumption, buyers do not necessarily buy from the closest place. \cite%
{feldman2016variations} study the case of uniformly distributed consumers;
whereas \cite{shen2017hotelling} study more general distributions. They
prove the existence of pure Nash equilibrium, but do not describe it. In the
political science literature (see Footnote 4 for the analogy between a
fixed-price location game between firms and an electoral competition game
between parties), \cite{downs1957economic}, \cite{hinich1970plurality} or
more recently \cite{xefteris2017simple} have also noted that if voters
prefer to abstain when neither party is close enough to their ideal policy,
differentiation may result at equilibrium. \cite{xefteris2017simple} study a
more general abstention function, and show that the game admits an
equilibrium in mixed strategies (existence result). They characterize
equilibria in pure strategies only under the assumption that voters are
uniformly distributed and for a special case of the abstention function. 
\cite{hinich1970plurality} mostly focus on the comparison of parties'
objectives: plurality maximization versus vote maximization. In the latter case,
which is the one we study in this paper, they show that differentiation can
occur in equilibria. We provide a more complete characterization of all
equilibria.\footnote{%
In particular, because in their paper they mostly focus on first order
conditions, they fail to notice that a continnum of asymmetric equilibra may
exist under some configurations of the parameters, and that first order
conditions are not necessarily sufficient.} 

\bigskip

The paper is organized as follows. The model is presented in more detail in
section 2. Section 3 characterizes all Nash equilibria in which firms play
pure strategies. Section 4 comments the results and discusses some of the
assumptions. Section 5 contains the proofs.

\section{The model}

We study a (fixed-price) spatial competition game between two firms facing
consumers with unit demand functions.

\begin{itemize}
\item The two firms ($i=1,2$) produce the same homogeneous good. Firm $i\in
\{1,2\}$ produces quantity $q$ at cost $\gamma _{i}(q)$. Firms sell the good
at some identical pre-determined fixed price $p>0$. Before selling the good,
they simultaneously select locations $x_{1}$ and $x_{2}$ on the real line $%
\mathbb{R}$.

\item A mass 1 of potential consumers is distributed on $X=\mathbb{R}$
according to a probability distribution that is absolutely continuous with
respect to the Lebesgue measure. We denote $f$ its density, and $F$ its
cumulative distribution. We focus our analysis on the set $\mathcal{D}$ of
distributions that have continuous log-concave densities\ (i.e. such that $f$
can be written $f(x)=e^{g(x)}$ where $g$ is a concave function), and such
that $f$ is symmetric around $0$ and strictly decreasing on $\mathbb{R}^{+}$%
. The analysis would be identical if the distribution was symmetric around a
mode different from $0$. The above hypotheses describe a very large class $%
\mathcal{D}$ of distributions that contains for example the normal
(centered) distributions, the Laplace distributions, the symmetric
exponential distributions, the logistic distributions, the symmetric gamma
distributions, the symmetric extreme value distributions, etc.\footnote{%
In Section \ref{section:uniform}, we also study the case of a uniform
distribution of consumers.}

\item All consumers have the same valuation for the good $v>0$. They also
incur transportation costs: they have a utility loss of traveling a distance 
$d\geq 0$ that is denoted $c(d)$. We suppose that $c\left( 0\right) =0$ and
that $c$ is strictly increasing and continuous. If a consumer travels a
distance $d$ to buy the good at the pre-determined price $p$, she gets the
total utility: 
\begin{equation*}
u=v-p-c(d).
\end{equation*}%
If she doesn't buy the good, her utility is normalized to $0$. Assuming that 
$v-p>0$, $u$ is positive whenever the distance $d$ is smaller than $\delta $%
, where: 
\begin{equation}
\delta :=\left \{ 
\begin{array}{l}
c^{-1}(v-p)>0\text{ if }v-p\leq \lim \limits_{d\rightarrow \infty }c(d), \\ 
+\infty \text{ \  \  \  \  \ otherwise.}%
\end{array}%
\right.  \label{Definition_delta}
\end{equation}%
Parameter $\delta >0$ denotes the maximal distance that a consumer is ready
to travel to buy the good. It is strictly increasing in the valuation of the
good ($v$) and decreasing in its price ($p$). Under these assumptions, a
consumer buys from the closest firm if her distance to the firm is smaller
than $\delta $ (randomly choosing a firm if both firms are equidistant from
her own position), and she doesn't buy otherwise.

\item We assume that firms serve all the demand they face at price $p$, and
that they maximize their profit. When a quantity $q$ of consumers buy from
firm $i$, it makes a profit equal to $p\times q-\gamma _{i}(q)$. We assume $%
\gamma _{i}^{\prime }\geq 0$, $\gamma _{i}^{\prime \prime }\geq 0$ and $%
\gamma _{i}^{\prime }(1)<p$, which imply that this profit function is
strictly increasing with respect to $q$. Under these assumptions, maximizing
its profit is equivalent for the firm to maximizing the quantity it sells.

\item We can now formally define the 2-player game $\mathcal{H}(f,\delta )$
associated to distribution $f$ and parameter $\delta $. The firms
simultaneously select locations $x_{1}$ and $x_{2}$ in $\mathbb{R}$. We
denote by $q_{i}(x_{1},x_{2})$ the quantity of consumers who buy from firm $%
i $ when players choose locations $x_{1}$ and $x_{2}\in \mathbb{R}$. Since
for a firm, maximizing its profit is equivalent to maximizing the quantity
of consumers who buy from this firm, we define the payoff of firm $i$ as
being $q_{i}(x_{1},x_{2})$. Given our assumptions about consumer behavior,
the payoffs of the players are defined by:%
\begin{equation*}
q_{i}(x_{1},x_{2}):=\left \{ 
\begin{array}{c}
\displaystyle \int_{\{t~:\text{ }\left \{ \left \vert x_{i}-t\right \vert
\leq \delta \text{ and }\left \vert x_{i}-t\right \vert =\min \left \{ \left
\vert x_{1}-t\right \vert ,\left \vert x_{2}-t\right \vert \right \} \right.
\}}f(t)dt\text{ if }x_{1}\neq x_{2}, \\ 
\\ 
\displaystyle \frac{1}{2}\int_{\{t~:\text{ }\left \vert x_{i}-t\right \vert
\leq \delta \}}f(t)dt\text{ if }x_{1}=x_{2}.%
\end{array}%
\right.
\end{equation*}
\end{itemize}

We now introduce a number of definitions that will be useful to present our
main results.

\subparagraph{\textbf{Definition \textquotedblleft Potential attraction
zones\textquotedblright~: }}

We call \textit{potential attraction zone} of a firm the set of locations
such that consumers at these locations prefer buying from this firm rather
than not buying the good at all. Formally, the potential attraction zone of
firm $i$ when locations are $(x_{1},x_{2})$, denoted by $A_{i}(x_{1},x_{2})$%
, is $A_{i}(x_{1},x_{2}):=\{t~:$ $\left \vert x_{i}-t\right \vert \leq
\delta \}$.

\subparagraph{\textbf{Definition \textquotedblleft No
differentiation\textquotedblright / \textquotedblleft Full convergence
\textquotedblright~: }}

We say that at profile of locations $(x_{1},x_{2})$, there is \emph{no} $%
\emph{differentiation}$ (or \emph{full convergence}) if the potential
attraction zones of the two firms exactly coincide. Formally, this is the
case if $A_{1}(x_{1},x_{2})=A_{2}(x_{1},x_{2})$. Note that this happens if
and only if $x_{1}=x_{2}$.

\subparagraph{\textbf{Definition \textquotedblleft Partial differentiation
\textquotedblright~: }}

We say that at profile of locations $(x_{1},x_{2})$, there is \emph{partial} 
$\emph{differentiation}$ if the potential attraction zones of the two firms
partially overlap. Formally, this means that the following two conditions
are simultaneously satisfied: (i) the two potential attraction zones $%
A_{1}(x_{1},x_{2})$ and $A_{2}(x_{1},x_{2})$ intersect over a set of
consumers of positive measure, (ii) $x_{1}\neq x_{2}$.

\subparagraph{\textbf{Definition \textquotedblleft Full
differentiation\textquotedblright~: }}

We say that at profile of locations $(x_{1},x_{2})$, there is \emph{full} $%
\emph{differentiation}$ if the potential attraction zones of the two firms
do not intersect, or intersect over a set of consumers of measure $0$.

\bigskip

Note that if $\delta =+\infty $, then for any distribution in $\mathcal{D}$
we have that $(0,0)$ is the unique equilibrium (Median Voter theorem). In
the following we focus on the case where $\delta \in ]0,+\infty \lbrack $.

In the next section, we characterize all Nash equilibria in pure strategies
of the games $\mathcal{H}(f,\delta )$ for $\delta \in ]0,+\infty \lbrack $
and $f\in \mathcal{D}$.

\section{Equilibria}

In this section, we characterize all Nash equilibria in pure strategies of
the games $\mathcal{H}(f,\delta )$. We will show that for any $\delta \in
]0,+\infty \lbrack $ and $f\in \mathcal{D}$, the game $\mathcal{H}(f,\delta )
$ always has at least one equilibrium in pure strategy.\footnote{%
We know from \cite{xefteris2017simple}(Proposition 1) that the game admits a symmetric Nash
equilibrium (possibly) in mixed strategies.} We
present these equilibria according to the level of differentiation they
entail. The main results of the paper are Proposition \ref{prop:no
differentiation}, Proposition \ref{prop:partial differentiation} and
Proposition \ref{prop:full differentiation}, which show that a pure strategy
equilibrium always exists, and characterize the set of equilibria. They deal
respectively with equilibria inducing no, partial and full differentiation.

We will show that the necessary and sufficient conditions for the existence
of these different types of equilibria only depend on parameter $\delta $
(the maximum distance a consumer is ready to travel to buy the good) and on
a parameter $\kappa $ that is defined as the positive solution of the
equation:
\begin{equation}
\frac{1}{2}f\left( 0\right) =f(\kappa ).  \label{Definition_kappa}
\end{equation}%
Note that Equation (\ref{Definition_kappa}) admits exactly one positive
solution. Indeed, $f$ is continuous, strictly decreasing on $\mathbb{R}^{+}$
with $f(0)>0$ and $\lim_{+\infty }f(x)=0$ (because $f$ is a decreasing
probability density). Parameter $\kappa $ is the time it takes for $f$ to decrease to half its
modal value.\ It measures how 'flat' the consumer distribution is.\newline

\begin{proposition}
\label{prop:no differentiation}\textbf{(No differentiation)}\newline
$(i)$ A (pure strategy) equilibrium with \emph{no differentiation} exists if
and only if $\delta \geq \kappa $. \newline
$(ii)$ In this case, the unique equilibrium of the game is $\left(
0,0\right) $: both firms converge at the median/modal position.
\end{proposition}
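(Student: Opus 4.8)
The plan is to characterize when firms converging at the origin $(0,0)$ is a Nash equilibrium, and simultaneously show no other no-differentiation profile can be an equilibrium. Since no differentiation means $x_1 = x_2 = x$ for some common location $x$, I first compute the payoff at such a profile. When both firms sit at $x$, each gets half of all consumers within distance $\delta$ of $x$, so the payoff to each firm is $\tfrac{1}{2}\int_{x-\delta}^{x+\delta} f(t)\,dt$. The candidate equilibrium profile is $x=0$. The heart of the argument is a deviation analysis: holding firm $2$ fixed at $x$, I ask whether firm $1$ can strictly increase its captured mass by moving to some $x_1 \neq x$.

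**Next I would** set up the deviation payoff. Suppose firm $2$ stays at $x$ and firm $1$ moves a small distance to the right, to $x + \varepsilon$ with $\varepsilon > 0$. The consumers split at the midpoint $x + \varepsilon/2$: firm $1$ captures everyone to the right of the midpoint who is within $\delta$ of $x+\varepsilon$, and firm $2$ captures everyone to the left within $\delta$ of $x$. The deviation gain for firm $1$ relative to staying put is the mass of newly-won peripheral consumers on the right (those in $(x+\delta,\, x+\varepsilon+\delta]$, previously unreachable or split) minus the mass of central consumers ceded to firm $2$ (those between the old and new dividing lines). The key computation is the marginal effect at $\varepsilon = 0^{+}$. **The hard part will be** correctly evaluating this marginal gain and translating the condition ``no profitable deviation from $(0,0)$'' into the clean threshold $\delta \geq \kappa$. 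The right-edge marginal gain is governed by $f(x+\delta)$ (density of consumers newly reached at the far edge), while the marginal loss of central consumers near the dividing midpoint is governed by $\tfrac{1}{2}f(x)$ (half the density at the split point, since the midpoint moves at half the speed of the firm). Evaluating at the candidate $x=0$, firm $1$'s rightward move is unprofitable at the margin precisely when $\tfrac{1}{2}f(0) \geq f(\delta)$, i.e. (using that $f$ is strictly decreasing on $\mathbb{R}^{+}$ and the definition $\tfrac{1}{2}f(0) = f(\kappa)$) precisely when $f(\delta) \leq f(\kappa)$, equivalently $\delta \geq \kappa$.

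**The remaining work** is to upgrade this first-order condition into a genuine global statement and to pin down the location. For sufficiency, when $\delta \geq \kappa$ I must verify that no deviation of \emph{any} size (not just infinitesimal) is profitable; here I would exploit log-concavity of $f$, which makes the deviation payoff as a function of $x_1$ well-behaved (single-peaked), so that the first-order condition at the margin suffices to rule out all larger deviations. I would also check deviations that overshoot so far that firm $1$'s attraction zone no longer overlaps firm $2$'s, comparing the captured mass directly against the symmetric-split payoff. For necessity, when $\delta < \kappa$ the marginal computation above shows $\tfrac{1}{2}f(0) < f(\delta)$, so a small move away from any candidate is strictly profitable, breaking every no-differentiation profile. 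Finally, for part $(ii)$, I must argue that $x = 0$ is the \emph{only} surviving location: at any common location $x \neq 0$, symmetry is broken, and the firm can profitably shade toward the mode because the density is higher on the modal side, so moving toward $0$ strictly increases the captured central mass. This rules out $(x,x)$ for $x \neq 0$ and establishes uniqueness.
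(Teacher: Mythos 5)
Your proposal is correct and follows essentially the same route as the paper: the first-order condition at $(0,0)$ yields the threshold $f(\delta)\leq\tfrac{1}{2}f(0)$, i.e. $\delta\geq\kappa$; log-concavity (the paper's $\Psi_z$ monotonicity) upgrades the marginal condition to rule out all deviations in $(0,2\delta]$, with far deviations handled separately; and uniqueness of the location $0$ follows from the fact that at any $(x,x)$ with $x\neq 0$ a firm gains strictly by shading toward the mode (the paper's Lemmas \ref{le:PropertyF} and \ref{le:4claims}). The only detail worth making explicit in a full write-up is that the payoff $x_2\mapsto q_2(0,x_2)$ is continuous at $x_2=0$ only because of the symmetry of $f$, which is what legitimizes reading the one-sided derivative as a first-order condition there.
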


\begin{proposition}
\label{prop:partial differentiation}\textbf{(Partial differentiation)}%
\newline
$(i)$ A (pure strategy) equilibrium with \emph{partial differentiation}
exists if and only if $\frac{\kappa }{2}<\delta <\kappa $. \newline
$(ii)$ In this case, the unique equilibrium of the game is $\left( \delta
-\kappa ,\kappa -\delta \right) $ (up to a permutation of the players).
\end{proposition}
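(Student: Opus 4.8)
The plan is to prove the ``if'' direction by verifying that the profile $(\delta-\kappa,\kappa-\delta)$ is a Nash equilibrium, and to obtain uniqueness together with the converse from a first-order argument that pins down any partial-differentiation equilibrium. Throughout I write $a=\kappa-\delta>0$ and $g=\ln f$, which is concave by hypothesis; I use that $f$ is even, so $g'$ is odd with $g'<0$ on $\mathbb{R}^{+}$ and $g'>0$ on $\mathbb{R}^{-}$.

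First I would study firm $1$'s payoff $Q(x):=q_{1}(x,a)$ as firm $2$ is held at $a$. It is piecewise smooth, the pieces recording whether firm $1$ is left of, at, or right of firm $2$ and whether the two zones overlap. At the candidate $x=-a$ firm $1$ serves exactly $[-\kappa,0]$ and firm $2$ serves $[0,\kappa]$, so the profile is symmetric. In the ``left, overlapping'' regime $x\in(a-2\delta,a)$ one has $Q(x)=F\big(\tfrac{x+a}{2}\big)-F(x-\delta)$, hence $Q'(x)=\tfrac12 f\big(\tfrac{x+a}{2}\big)-f(x-\delta)$; at $x=-a$ the midpoint is $0$ and the left edge is $-\kappa$, so $Q'(-a)=\tfrac12 f(0)-f(\kappa)=0$ by the defining equation (\ref{Definition_kappa}). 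Thus $-a$ is a critical point.

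The main work is upgrading this to a global maximum. On the overlapping-left regime I would show single-peakedness: with $\Psi(x)=g(x-\delta)-g\big(\tfrac{x+a}{2}\big)$, the sign of $Q'$ is opposite to that of $\Psi(x)+\ln 2$, and $\Psi'(x)=g'(x-\delta)-\tfrac12 g'\big(\tfrac{x+a}{2}\big)>0$ throughout, because $\delta>\kappa/2$ keeps the left edge $x-\delta$ negative (so $g'(x-\delta)>0$) and concavity controls the second term. Hence $\Psi$ is strictly increasing, $-a$ is the unique zero of $\Psi+\ln 2$, and $Q$ rises then falls, with value $\int_{-\kappa}^{0}f$. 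I would then dominate the other regimes: non-overlapping-left, where $Q(x)=\int_{x-\delta}^{x+\delta}f$ is increasing for $x<0$ and matches the previous piece at the boundary; and the region to the right of firm $2$, where the overlapping payoff $\int_{(x+a)/2}^{x+\delta}f$ is single-peaked with an \emph{interior} maximizer $x^{*}$ (one checks $Q'>0$ just right of $a$ but $Q'<0$ at $x=a+2\delta$, using $f(2\kappa)\le\tfrac14 f(0)$ from midpoint log-concavity), at which the served interval lies in $\mathbb{R}^{+}$ and has length $\le\kappa$, so its mass is at most $\int_{0}^{\kappa}f$. The quantitative input is the log-concavity fact that the half-life map $T(s)$, defined by $f(T(s))=\tfrac12 f(s)$, satisfies $T(s)-s\le\kappa$ for $s\ge 0$; together with $s\mapsto\int_{s}^{s+\kappa}f$ being non-increasing on $\mathbb{R}^{+}$ this yields the bound. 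I expect this last case --- controlling deviations to the far side of the opponent, where firm $1$ can deploy its full width $2\delta>\kappa$ --- to be the main obstacle. The coincidence payoff, being the average of the two one-sided limits, is also dominated. Hence $-a$ is firm $1$'s unique best reply, and by the reflection symmetry $t\mapsto -t$ of the game, $a$ is firm $2$'s unique best reply to $-a$; so $(\delta-\kappa,\kappa-\delta)$ is an equilibrium, and it displays partial differentiation precisely when $\tfrac{\kappa}{2}<\delta<\kappa$.

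For uniqueness and the converse, let $(x_{1},x_{2})$ with $x_{1}<x_{2}$ be any partial-differentiation equilibrium. Overlap forces $x_{1}\in(x_{2}-2\delta,x_{2})$, an open interval on which firm $1$'s payoff is the smooth $F\big(\tfrac{x_{1}+x_{2}}{2}\big)-F(x_{1}-\delta)$; a global maximizer at an interior point of differentiability is critical, and likewise for firm $2$, so $\tfrac12 f(m)=f(x_{1}-\delta)=f(x_{2}+\delta)$ with $m=\tfrac{x_{1}+x_{2}}{2}$. Since $x_{1}-\delta<x_{2}+\delta$ and $f$ is even and strictly monotone on each half-line, $f(x_{1}-\delta)=f(x_{2}+\delta)$ forces $x_{1}-\delta=-(x_{2}+\delta)$, i.e. $x_{1}=-x_{2}$ and $m=0$; then $f(x_{1}-\delta)=\tfrac12 f(0)=f(\kappa)$ with $x_{1}-\delta<0$ gives $x_{1}-\delta=-\kappa$, whence $(x_{1},x_{2})=(\delta-\kappa,\kappa-\delta)$, proving uniqueness. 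The gap equals $2(\kappa-\delta)$, and overlap (gap $<2\delta$) holds iff $\delta>\kappa/2$; so if $\delta\le\kappa/2$ the forced positions contradict overlap and no such equilibrium exists, while if $\delta\ge\kappa$ Proposition \ref{prop:no differentiation}$(ii)$ already makes $(0,0)$ the unique equilibrium. This settles both parts.
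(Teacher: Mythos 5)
Your proposal is correct, and its core coincides with the paper's: the first-order system $f(x_{1}-\delta)=f(x_{2}+\delta)=\tfrac12 f\big(\tfrac{x_{1}+x_{2}}{2}\big)$ forces symmetry and pins down $\kappa-\delta$, log-concavity gives the single-crossing of the payoff derivative that upgrades the critical point to a maximum, and the window $\tfrac{\kappa}{2}<\delta<\kappa$ comes from requiring the forced positions to be distinct and overlapping. Where you genuinely diverge is in excluding deviations past the opponent. The paper disposes of these once and for all in Lemma \ref{le:4claims}: by evenness of $f$, a firm sitting on the far side of a left-of-center opponent does strictly worse than its mirror image, so every best response to $x_{1}\in[-2\delta,0[$ lies in $]x_{1},x_{1}+2\delta]$ and only the near-side overlapping regime ever needs a derivative computation. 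You instead bound the jump-over payoff directly: at the interior critical point of the far-side regime the served interval is $[s,T(s)]$ with $s>0$ and, by concavity of $\ln f$, half-life spacing $T(s)-s\le\kappa$, so its mass falls strictly below $\int_{0}^{\kappa}f$, the candidate's payoff. Both work; the paper's reflection trick is shorter and is reused across all three propositions, while your half-life bound is self-contained and makes the economic comparison (a peripheral interval of width at most $\kappa$ against the central interval $[-\kappa,0]$) explicit. Your auxiliary claims --- $f(2\kappa)\le\tfrac14 f(0)$ by midpoint concavity of $\ln f$, the monotonicity of $s\mapsto\int_{s}^{s+\kappa}f$ on $\mathbb{R}^{+}$, and the coincidence payoff being the average of the two one-sided limits --- are all correct, so the sketched steps do close.
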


\begin{proposition}
\label{prop:full differentiation}\textbf{(Full differentiation)}\newline
$(i)$ A (pure strategy) equilibrium with \emph{full differentiation} exists
if and only if $\delta \leq \frac{\kappa }{2}$. \newline
$(ii)$ In this case, there is a unique symmetric equilibrium $(-\delta
,\delta )$. Besides, as soon as $\delta <\frac{1}{2}\kappa $, there is also
a continuum of asymmetric Nash equilibria, where firms are located at
distance exactly $2\delta $ one from the other. \newline
More specifically, (up to a permutation of the players) the whole set of
equilibria is $(m-\delta ,m+\delta )$ for $m\in \left[ -\alpha ,\alpha %
\right] $, where $\alpha \in \left[ 0,\delta \right] $ is uniquely defined
by:%
\begin{equation}
\alpha :=\max \{t\in \lbrack 0,\delta ]:\frac{1}{2}f\left( t\right) \leq
f(t+2\delta )\}.  \label{Definition_alpha}
\end{equation}
\end{proposition}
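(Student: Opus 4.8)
The plan is to organise everything around the payoff a firm draws from its own attraction zone when the two zones are disjoint. At any full-differentiation profile the firms do not compete, so with $x_1<x_2$ the payoffs are $q_i=h(x_i)$ where $h(x):=F(x+\delta)-F(x-\delta)$. Since $f$ is symmetric and strictly decreasing on $\mathbb{R}^{+}$, one checks that $h'(x)=f(x+\delta)-f(x-\delta)$ is positive for $x<0$ and negative for $x>0$, so $h$ is strictly single-peaked at $0$. This single-peakedness drives the first reduction: a firm can always slide toward the centre while staying in full differentiation, up to the point where the zones touch, so the left firm's best full-differentiation position is $\min(0,x_2-2\delta)$ and the right firm's is $\max(0,x_1+2\delta)$. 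A short case analysis then shows that a strict gap $x_2-x_1>2\delta$ would force $x_1=0$ and $x_2=0$ at once, a contradiction; hence every full-differentiation equilibrium has touching zones and is of the form $P_m:=(m-\delta,m+\delta)$, matching the claimed parametrisation. These internal deviations also give the preliminary range $m\in[-\delta,\delta]$.

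Next I would isolate the one subtle deviation: the firm whose zone sits on the thinner (tail) side diving into the competitive region. Writing $g=\log f$ and $G(t):=g(t)-g(t+2\delta)$, concavity of $g$ gives $G'(t)=g'(t)-g'(t+2\delta)\ge 0$, so $G$ is non-decreasing, $\{t:G(t)\le\log 2\}$ is a down-set, and $\alpha=\max\{t\in[0,\delta]:G(t)\le\log 2\}$ is precisely the largest admissible touching point. Taking $m\ge 0$ without loss, I compute the right firm's payoff when it moves into competition: with $a$ the right edge of its catchment it equals $Q_2(a)=F(a)-F(b)$ where $b=\tfrac{m+a}{2}-\delta$, so $Q_2'(a)=f(a)-\tfrac12 f(b)$. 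At the equilibrium endpoint $a=m+2\delta$ (so $b=m$) this is $f(m+2\delta)-\tfrac12 f(m)$, which is $\ge 0$ iff $G(m)\le\log 2$ iff $m\le\alpha$. This yields both the necessity of $m\in[-\alpha,\alpha]$ (for $m>\alpha$ the firm strictly gains by entering competition) and part $(i)$: $P_0=(-\delta,\delta)$ is an equilibrium iff $0\le\alpha$ iff $f(2\delta)\ge\tfrac12 f(0)$ iff $2\delta\le\kappa$.

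For sufficiency I must rule out all deviations for $m\in[-\alpha,\alpha]$. By the reflection symmetry $P_m\mapsto P_{-m}$ (which swaps the firms), it suffices to show the right firm cannot improve, and the heart is $Q_2(a)\le Q_2(m+2\delta)$ for every competing $a\in(m,m+2\delta)$. I would prove $Q_2'(a)\ge 0$ whenever $a\ge 0$: from $b<m\le\alpha$ and $G$ non-decreasing, $\tfrac12 f(b)\le f(b+2\delta)$; and since $a<b+2\delta$ with both points positive, strict monotonicity of $f$ on $\mathbb{R}^{+}$ gives $f(a)\ge f(b+2\delta)\ge\tfrac12 f(b)$. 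The region $a<0$ arises only when $m<0$, where the whole catchment lies left of the mode; there I bound its mass by $\int_{-2\delta}^{0}f$ and compare with the equilibrium mass $\int_{m}^{m+2\delta}f$ using symmetry and unimodality. Deviations that cross to the far side of the opponent, or onto its location, are dominated by the same unimodality estimate.

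The main obstacle is exactly this global verification in the sufficiency step: $Q_2$ need not be monotone on the whole interval when $m<0$, so the clean log-concavity monotonicity argument (valid for $a\ge 0$) must be stitched to a unimodality mass-comparison on the region where the deviating catchment falls entirely on the far side of the mode, with the two pieces checked to meet correctly at $a=0$. Everything else reduces to single-peakedness of $h$ and monotonicity of $G$. Finally, $P_0$ is the unique symmetric touching profile, giving uniqueness of the symmetric equilibrium, while $\alpha>0$ whenever $\delta<\tfrac12\kappa$ follows from $G(0)<\log 2$ and continuity of $G$, producing the continuum of asymmetric equilibria and completing part $(ii)$.
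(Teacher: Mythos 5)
Your proposal is correct and follows essentially the same route as the paper: reduction to touching profiles $(m-\delta,m+\delta)$ via single-peakedness of the $2\delta$-interval mass (the paper's Lemma \ref{le:4claims}), necessity of $|m|\leq\alpha$ from one-sided first-order conditions together with monotonicity of $t\mapsto f(t)/f(t+2\delta)$ (your $G$ is exactly $\log\Psi_{2\delta}$), and sufficiency by signing the deviating payoff's derivative on the competitive range. The only divergence is in that last step, where the paper's single chain of $\Psi$-inequalities covers all competitive deviations uniformly, while you split off the case where the deviating catchment lies entirely in the left tail and close it with a unimodality mass comparison --- a slightly less streamlined but equally valid patch.
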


The proof of these three propositions is provided in the Appendix (Section %
\ref{se:proof}). Before we give in the next section an economic intuition
for these main results, note that parameters $\kappa $ and $\alpha $ are
easy to derive from the distribution $f$ of consumers, as illustrated in the
following examples.

\begin{example}
\label{Ex: Normal distribution}~~\textbf{(Normal distribution)\newline
}Suppose that consumers are distributed according to a normal distribution $%
\mathcal{N}(0,\sigma ^{2})$, i.e. $f(x)=\frac{1}{\sigma \sqrt{2\pi }}e^{-%
\frac{x^{2}}{2\sigma ^{2}}}$ with $\sigma >0$. \newline
Then: $\kappa =\sigma \sqrt{2ln(2)}$ and $\alpha =\min \left \{ \delta ,%
\frac{\sigma ^{2}ln(2)}{2\delta }-\delta \right \} $. \newline
\end{example}

\begin{example}
\label{Ex: Laplace distribution}~~\textbf{(Laplace distribution)\newline
}Suppose that consumers are distributed according to a Laplace distribution $%
\mathcal{L}(0,\beta )$, i.e. $f(x)=\frac{1}{2\beta }e^{-\frac{\left \vert
x\right \vert }{\beta }}$ with $\beta >0$. \newline
Then: $\kappa =\beta ln(2)$ and $\alpha =\delta $.
\end{example}

\section{Comments and discussion}

In this section, we first comment upon our main results, and give the main
economic intuition. We then propose some efficiency considerations. Last, we
discuss how our results should be adapted in the case of a uniform
distribution of consumers.

\subsection{Comments}

As explained in the introduction, we explain differentiation by a direct
demand-driven effect, stemming from the fact that there is a maximal
distance consumers are ready to cover to buy the good. To understand how
this effect operates, consider again the powerful argument leading to
convergence in the case of a perfectly inelastic demand ($\delta =+\infty $%
). Suppose that the firms choose different locations. Then each of them can
unambiguously increase its profit by moving closer to its competitor.
Indeed, consider the firm initially located on the right hand side, say Firm
2. By moving closer to its opponent (that is, moving to the left),

\begin{enumerate}
\item Firm 2 does not lose any consumers on its right-hand side (by
assumption, these consumers will still buy from Firm 2), and

\item Firm 2 attracts a larger quantity of consumers located between the two
firms.
\end{enumerate}

When we relax the assumption that consumers are ready to buy the good
whatever the locations of the firms, this latter argument is still active:
There is still a force towards convergence, due to the willingness to
compete for the "central" consumers. But the former argument according to
which Firm 2 does not lose any consumers on its right-hand side is no longer
valid. In that case, by moving closer to its competitor, the firm may lose
the "peripheral" consumers who were indifferent between buying from Firm 2
and not buying the good. There are now two types of relevant marginal
consumers: those who are located between the two firms and could potentially
buy from both, and those who are located at the border of the domains of
attraction and who are indifferent between buying from the closest firms and
not buying the good. Depending on how this trade-off is solved, there can be
no, partial or full differentiation at equilibrium.

Propositions \ref{prop:no differentiation}, \ref{prop:partial
differentiation} and \ref{prop:full differentiation} taken together show
that the regime regarding the firm differentiation depends on the ratio $%
\frac{\delta }{\kappa }$. Interestingly, the characterization of the
equilibria does not depend on the details of the transport cost function.
The only thing that matters is the maximal distance the consumer is ready to
cover to buy the good ($\delta $) and the shape of the distribution of
consumers, as summarized by parameter $\kappa $, where $\kappa $ is the time
it takes for the density $f$ to decrease to half its initial value (see
Equation (\ref{Definition_kappa})).

\bigskip

\textbf{Case }$\frac{\delta }{\kappa }\geq 1$\textbf{: No differentiation.}
In that case, there is a unique Nash equilibrium, at which both firms choose
to locate at the median position. As noticed in the introduction, the
intuition suggests that firms will convergence at the center if the distance
a consumer is ready to cover to buy the good is large enough ($\delta $
large) or if consumers are sufficiently numerous around the center.
Proposition \ref{prop:no differentiation} provides a precise quantification
for these conditions: The situation where both firms converge is an
equilibrium if and only if $\delta \geq \kappa $.

To understand the intuition behind this condition, assume that one firm, say
firm 1, chooses the modal median location ($0$). If its opponent also
selects this central position, both firms will have exactly the same
potential attraction zones: $A_{1}(0,0)=A_{2}(0,0)=\left[ -\delta ,\delta %
\right] ,$ and each will attract one half of the consumers who are located
within acceptable distance of $0$. Therefore, the payoff for firm 2 is $%
q_{2}(0,0)=\frac{F(\delta )-F(-\delta )}{2}$. Since $f$ is assumed to be
symmetric, note that $q_{2}(0,0)=F(\delta )-F(0)$. If firm 2 moves slightly
to the right, say by some small $\varepsilon >0$, its potential attraction
zone will now be $A_{2}(0,\varepsilon )=\left[ \delta -\varepsilon ,\delta
+\varepsilon \right] $. By doing so, it will attract all consumers located
between $\frac{\varepsilon }{2}$ and $\delta +\varepsilon $, and $%
q_{2}(0,\varepsilon )=F(\delta +\varepsilon )-F(\frac{\varepsilon }{2})$.
The move is beneficial if the mass of consumers located at $\delta $ is
larger than half the mass of the consumers located at $0$. This condition is 
$\frac{1}{2}f(0)<f(\delta )$, which is exactly the condition $\delta <\kappa 
$ (remember that $\kappa $ is the time it takes for the density $f$ to
decrease to half its initial value). The assumptions about the logconcavity
of $f$ are sufficient to guarantee that the examination of first order
conditions are sufficient to characterize equilibrium. We also show in the
appendix that there is no equilibrium with convergence at another location
than $0$.

Consider Example \ref{Ex: Normal distribution}. In the case of a normal
distribution with variance $\sigma ^{2}$, the condition $\delta \geq \kappa $
can be written as $\delta \geq \sigma \times \sqrt{2\ln (2)}$. Note that $%
\sqrt{2\ln (2)}$ is approximately equal to $1.18$. This shows that, for the
no differentiation principle to hold, the total length\ of a firm's
potential attraction zone ($2\delta $) has to be approximately at least as
large as$\ 2.35$ time the standard deviation of the distribution of consumer
locations. This figure is quite high. By instance, one may check that when $%
\delta /\sigma =\sqrt{2\ln (2)}$, the potential attraction zone of a firm
located at the center covers over 75\% of the population.

\bigskip

\textbf{Case} $\frac{1}{2}<\frac{\delta }{\kappa }<1$\textbf{: Partial
differentiation.} In that case, for each $(\delta ,\kappa )$, there is a
unique Nash equilibrium, in which the two firms engage in \textquotedblleft
partial differentiation\textquotedblright . The unique equilibrium is
symmetric, with firms choosing locations $\left( \delta -\kappa ,\kappa
-\delta \right) $, where $0<\kappa -\delta <\delta $. The distance between
the two firms is $2\left( \kappa -\delta \right) <2\delta $: a positive mass
of consumers, in particular the median consumer, are located within
acceptable distance of both firms. Note that the distance between the two
firms is decreasing in $\delta $.

To understand the intuition behind this result, remember that, as discussed
in the case of full convergence, whenever the potential attraction zones of
the two firms intersect on a set of positive mass, a firm faces a trade-off.
Indeed, by moving away from its opponent, it could attract new "peripheral"
consumers, who were not buying the good at the initial locations. But this
move would imply losing the "central" consumers who were initially
indifferent between the two firms, a share 1/2 of which were buying from
this firm in the initial situation. The equilibria described in that case
are characterized by the fact that these two effects exactly offset one
another. Besides, one can show (see Appendix) that only symmetric equilibria
exist in that case: firms choose symmetric locations, say $(-x,+x)$, $x\geq
0 $. Consider the firm at location $x$. For this firm, the "peripheral"
consumers that it could attract by moving further to the right are those
located around $x+\delta $, whereas the "central" consumers who are
initially indifferent between the two firms are those located around $0$.
The condition stating that the two opposite effects exactly counter-balance
is therefore $\frac{1}{2}f(0)=f(x+\delta )$, which yields $x=\kappa -\delta $%
. Note that this equilibrium only exists when the resulting distance is
strictly lower than $2\delta $, that is, when $2\kappa -2\delta <2\delta $ ($%
\frac{1}{2}<\frac{\delta }{\kappa }$).

When the ratio $\frac{\delta }{\kappa }$ is small enough so that this
condition is no longer satisfied, we move to a situation of full
differentiation.

\bigskip

\textbf{Case} $\frac{\delta }{\kappa }\leq \frac{1}{2}$\textbf{: Full
differentiation.} In that case, for each pair $(\delta ,\kappa )$, there is
a unique symmetric equilibrium $(-\delta ,\delta )$; besides, as soon as $%
\frac{\delta }{\kappa }<\frac{1}{2}$, there is also a continuum of
asymmetric Nash equilibria. In all these equilibria, the two firms are
located at distance $2\delta $ one from the other: the potential attraction
zones of the two firms do not intersect (more precisely, a mass zero of
consumers simultaneously belong to both potential attraction zones).

In the case of a normal distribution, the condition $\delta \leq \frac{1}{2}%
\kappa $ states that the domain of attraction of a firm located at the
center has to cover at most 45\% of the population.

\subsection{Efficiency of equilibria}

In this subsection, we compare equilibrium locations to these which would be
optimal either from the consumers' point of view (consumer surplus
maximizing locations) or from the firms' perspective (aggregate profit
maximizing locations). Aggregate profit maximizing locations and consumer
surplus maximizing locations are described in the following proposition.

\begin{proposition}
\label{prop:efficiency} ~~\textbf{(Efficiency)}\newline
$(1)$ \textbf{(Aggregate profit maximizing locations)} Assume that both
firsms have the same production functions ($\gamma _{1}=\gamma _{2}$). Then
the location profile maximizing the sum of the firms' profits is $(-\delta
,\delta )$, which entails full differentiation.\newline
$(2)$ \textbf{(Consumer surplus maximizing locations)} The location profile
maximizing consumer surplus entails partial differentiation. The detail of
the location profile maximizing consumer surplus depends on the
transportation cost function $c(.)$.
\end{proposition}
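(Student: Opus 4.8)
\section*{Proof proposal}

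\textbf{Part (1), Aggregate profit.} The plan is to bound the objective by a function of the total quantity sold and then maximize that. Write $\gamma:=\gamma_{1}=\gamma_{2}$, $Q:=q_{1}+q_{2}$, and note that $Q$ equals the $f$-measure of $A_{1}(x_{1},x_{2})\cup A_{2}(x_{1},x_{2})$. Since $\gamma$ is convex, for a fixed total $Q$ the cost $\gamma(q_{1})+\gamma(q_{2})$ is minimized at the equal split, so $\gamma(q_{1})+\gamma(q_{2})\ge 2\gamma(Q/2)$ and hence the aggregate profit satisfies $\Pi(x_{1},x_{2})=pQ-\gamma(q_{1})-\gamma(q_{2})\le\Phi(Q)$, where $\Phi(Q):=pQ-2\gamma(Q/2)$. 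I would then show $\Phi$ is strictly increasing on $[0,1]$: because $Q\le 1$ we have $Q/2\le 1/2$, and since $\gamma'$ is nondecreasing with $\gamma'(1)<p$,
\[
\Phi'(Q)=p-\gamma'(Q/2)\ge p-\gamma'(1/2)\ge p-\gamma'(1)>0.
\]
Thus maximizing $\Pi$ reduces to maximizing the covered mass $Q$ and then verifying that the maximizer splits it equally.

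The remaining step is that $Q$, the $f$-measure of the union of two intervals of length $2\delta$, is maximized uniquely at $(-\delta,\delta)$. I would split into two cases. If the two zones overlap, their union is a single interval of length strictly less than $4\delta$; by symmetry and strict unimodality of $f$ its mass is maximized by centering it at the mode $0$, which still gives strictly less than the mass of $[-2\delta,2\delta]$. If the zones are disjoint, placing their centers at $\pm m$ with $m\ge\delta$ gives total mass $2\int_{m-\delta}^{m+\delta}f$, whose derivative $2\bigl(f(m+\delta)-f(m-\delta)\bigr)$ is negative since $f$ is strictly decreasing on $\mathbb{R}^{+}$; hence this case is maximized at $m=\delta$, i.e. at $(-\delta,\delta)$, where the union is exactly $[-2\delta,2\delta]$. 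So $Q^{*}:=F(2\delta)-F(-2\delta)$ is attained only at $(-\delta,\delta)$ (up to relabeling), where symmetry forces $q_{1}=q_{2}$ and hence $\Pi=\Phi(Q^{*})$. Combining with $\Pi\le\Phi(Q)\le\Phi(Q^{*})$ gives $(-\delta,\delta)$ as the unique aggregate-profit maximizer, which is full differentiation.

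\textbf{Part (2), Consumer surplus.} Here I would write $CS(x_{1},x_{2})=\int_{A_{1}\cup A_{2}}\bigl[v-p-c(\min_{i}\lvert t-x_{i}\rvert)\bigr]f(t)\,dt$, note that $CS$ is continuous, bounded above by $v-p$, and bounded below by $CS(-\delta,\delta)>0$ while decaying when the firms leave a bounded region, so a maximizer exists. I would then exclude both extreme regimes by perturbation. To rule out \emph{no} differentiation, from a coincident profile a small symmetric separation strictly lowers the serving distance on a positive mass of consumers (each served by the now-closer firm, surplus rising since $c'>0$) while the newly covered edge consumers sit at distance near $\delta$ and contribute negligibly; the directional derivative is strictly positive. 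To rule out \emph{full} differentiation, the key is the boundary identity $c(\delta)=v-p$: consumers at the inner and outer edges of each zone carry zero net surplus, so moving the firms inward changes coverage only to first order through a zero-surplus margin, while it strictly reduces transport cost on the interior consumers. In the symmetric reduction to firms at $\pm a$, the computation gives
\[
\tfrac12\,CS'(a)=\bigl(v-p-c(\delta)\bigr)f(a+\delta)+\int_{0}^{\delta}c'(s)f(a+s)\,ds-\int_{0}^{a}c'(s)f(a-s)\,ds,
\]
so the leading term vanishes, yielding $CS'(0^{+})=\int_{0}^{\delta}c'(s)f(s)\,ds>0$ and $CS'(\delta)=\int_{0}^{\delta}c'(s)\bigl[f(\delta+s)-f(\delta-s)\bigr]ds<0$ because $f$ is strictly decreasing on $\mathbb{R}^{+}$. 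Hence the optimum lies strictly inside, i.e. it is partial differentiation, and the interior stationarity condition $\int_{0}^{\delta}c'(s)f(a+s)\,ds=\int_{0}^{a}c'(s)f(a-s)\,ds$ plainly depends on $c$.

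The main obstacle I anticipate is the reduction to symmetric profiles in Part (2). The exclusion of no differentiation is robust to asymmetry, but $CS$ is not known to be concave, so ruling out full differentiation at a possibly asymmetric maximizer requires either a direct inward-perturbation argument (moving each firm toward the center lowers distance on the inner side, which by monotonicity of $f$ carries more mass than the outer side it penalizes) or, more cleanly, a symmetrization showing that for fixed inter-firm distance the correlation $m\mapsto\int\Psi(t-m)f(t)\,dt$ of the symmetric served-surplus profile $\Psi$ with the symmetric unimodal $f$ peaks at $m=0$. Establishing that peaking property, and thereby legitimizing the one-dimensional analysis above, is the step I expect to require the most care.
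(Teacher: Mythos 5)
Your proposal is correct and follows essentially the same route as the paper: in Part (1), bound aggregate profit by an increasing function of total output using convexity of $\gamma$ and $\gamma'(1)<p$, then reduce to maximizing covered mass (you actually prove the coverage-maximization step that the paper merely asserts, though in the disjoint case you restrict a priori to symmetric centers $\pm m$; for full rigor you should first argue that the constraint $x_{2}-x_{1}\geq 2\delta$ binds and then maximize $x_{1}\mapsto\int_{x_{1}-\delta}^{x_{1}+\delta}f+\int_{x_{1}+\delta}^{x_{1}+3\delta}f$, whose derivative $f(x_{1}+3\delta)-f(x_{1}-\delta)$ vanishes only at $x_{1}=-\delta$). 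In Part (2), the step you flag as the main obstacle --- ruling out full differentiation at a possibly asymmetric maximizer --- is exactly where the paper uses the ``direct inward-perturbation argument'' you mention parenthetically, and it needs no symmetrization: for $x_{1}\leq x_{2}$ with $x_{2}-x_{1}\geq 2\delta$ the one-sided derivative of $CS$ in $x_{1}$ is $\int_{0}^{\delta}c'(s)\bigl[f(x_{1}+s)-f(x_{1}-s)\bigr]ds$, which is strictly positive whenever $x_{1}<0$ by unimodality of $f$; and if instead $x_{1}\geq 0$ then $x_{2}\geq 2\delta>0$ and the mirror perturbation of firm 2 works. Since the two firms cannot both sit at $0$ when they are $2\delta$ apart, one of the two perturbations always strictly improves $CS$, so no maximizer is fully differentiated; combined with your (correct) exclusion of coincident locations, any maximizer entails partial differentiation, and your interior stationarity condition confirms the dependence on $c$. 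So the concern you raise is real but resolves immediately, and no peaking or symmetrization lemma is required.
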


The proof of Proposition \ref{prop:efficiency} is provided in the appendix,
section \ref{Proof_efficiency}.

\bigskip

Figure 1 provides an illustration of Proposition \ref{prop:efficiency} when
consumers are distributed according to a standard normal distribution. It
plots the level of differentiation as a function of $\delta $: \textit{(i)}
in the Nash equilibria profile, \textit{(ii)} in the aggregate profit
maximizing profile, and \textit{(iii)} in the consumers' surplus maximizing
profile. More precisely, on the vertical axis, it shows the ratio of the
resulting distance between the two firms to the minimal distance between the
firms that guaranties full differentiation ($2\delta $). A ratio of $1$
means full differentiation and a ratio of $0$ means no differentiation. As
emphasized in Proposition \ref{prop:efficiency}, the profile of locations
that maximizes consumer surplus depends on the transportation cost function;
it this example, we choose linear transportation costs $c(d)=d$. As noted
earlier (see Example 4), when consumers are distributed according to $%
\mathcal{N}(1)$, $\kappa =\sqrt{2\ln (2)}$, which is approximately equal to $%
1.18$.

\begin{center}
\includegraphics[scale=0.3]{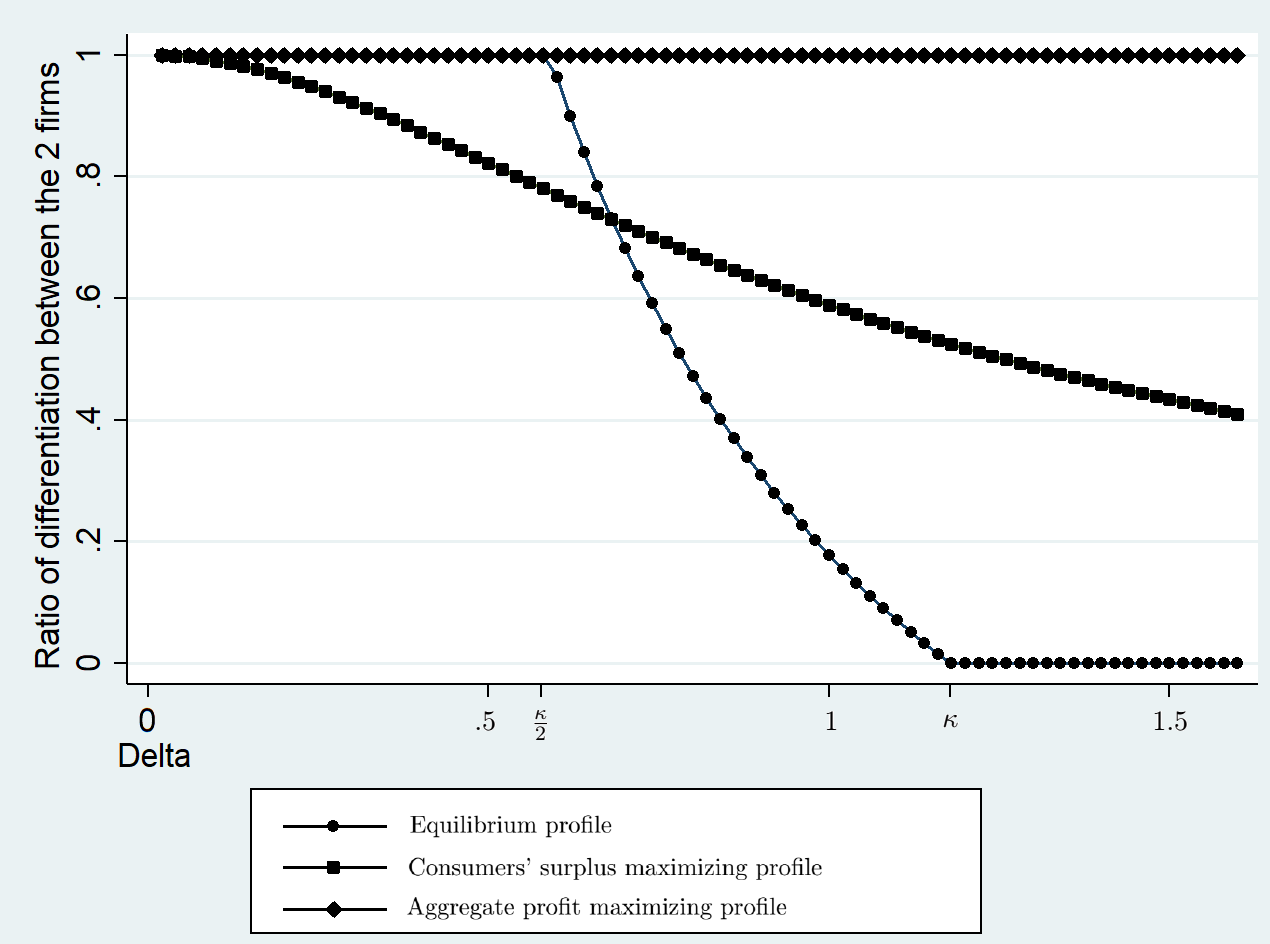}
\end{center}

This figure illustrates that there exists a unique value of $\delta \in
]0,+\infty \lbrack $ such that the equilibrium and the consumers surplus
maximizing profile coincide. For smaller value of $\delta $, the distance
between firms at equilibrium is strictly larger than it would be in a
consumer surplus maximizing profile, for larger value of $\delta $, it is
strictly smaller.

\subsection{Uniform distribution of consumers\label{section:uniform}}

So far, we have supposed that the distribution of consumers is symmetric
around $0$, log-concave and strictly decreasing on $\mathbb{R}^{+}$. These
assumptions are weak as this case includes most of standard distributions
(Normal, Laplace, Logistic, etc.). However, a large part of the literature
on horizontal differentiation has studied the particular case of consumers
uniformly distributed on an interval. In this subsection we discuss this
case, which is not included in our general model as in the uniform case, the
function $f$ is no longer strictly decreasing on $\mathbb{R}^{+}$.

We consider the case where consumers are distributed uniformly in the
interval $X=[-\kappa ,\kappa ]$, for some $\kappa >0$. We choose this
notation to be consistent with our previous notation (see Definition (\ref%
{Definition_kappa})). Indeed, consider the following alternative (more
general) definition for $\kappa $:%
\begin{equation*}
\kappa =\inf \left \{ t\in \mathbb{R}^{+}:\frac{1}{2}f(0)>f(t)\right \} .
\end{equation*}
It coincides with Definition (\ref{Definition_kappa}) when $f$ is continuous
and strictly decreasing on $\mathbb{R}^{+}$, but can also be used in the
uniform case. We still assume that the firms can choose any location on the
real line $\mathbb{R}$.

Proposition \ref{prop:unif} shows that most of the results stated in
Propositions 1, 2 and 3 extend to the uniform case, the only adaption to be
made being the characterization of equilibria with full differentiation.

\begin{proposition}
\label{prop:unif}~~\textbf{(Uniform distribution)}\newline
Assume that consumers are uniformly distributed on $[-\kappa ,\kappa ]$.%
\newline
Proposition 1, Proposition 2 and Part (i) of Proposition 3 extend to this
uniform case.\newline
The only difference lays with the characterization of equilibria with full
diferentiation (Part (ii) of Proposition 3): In the uniform case, if $\delta
\leq \frac{\kappa }{2}$, there exists a continuum of equilibria where the
two firms locate at distance at least $2\delta $ one from the other. More
specifically, supposing without loss of generality that $x_{1}\leq x_{2}$,
the whole set of equilibria is $\left( x_{1},x_{2}\right) $ for $-\kappa
+\delta \leq x_{1}\leq x_{1}+2\delta \leq x_{2}\leq \kappa -\delta .$
\end{proposition}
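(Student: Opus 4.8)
The plan is to prove Proposition~\ref{prop:unif} by leveraging the already-established Propositions~1--3 for the log-concave case and isolating exactly where the uniform distribution behaves differently. The uniform density $f(x)=\frac{1}{2\kappa}\mathbf{1}_{[-\kappa,\kappa]}(x)$ fails strict monotonicity on $\mathbb{R}^{+}$, so the arguments that relied on $\frac{1}{2}f(0)=f(\kappa)$ pinning down a \emph{unique} peripheral marginal consumer must be re-examined. The generalized definition $\kappa=\inf\{t\in\mathbb{R}^{+}:\frac{1}{2}f(0)>f(t)\}$ evaluates to $\kappa$ here, since $f$ jumps from $\frac{1}{2\kappa}$ to $0$ precisely at the endpoint, and $\frac{1}{2}f(0)=\frac{1}{4\kappa}>0=f(t)$ for all $t>\kappa$. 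So the reduction to the ratio $\delta/\kappa$ carries over verbatim for the first two regimes.

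First I would verify that Propositions~1 and~2 extend. For the no-differentiation regime ($\delta\geq\kappa$), I would re-run the deviation computation: starting from $(0,0)$, a firm moving to $\varepsilon>0$ gains the mass on $(\delta,\delta+\varepsilon]$ and loses the central consumers on $[0,\varepsilon/2)$. The gain is $0$ once $\delta\geq\kappa$ (the peripheral interval lies outside the support), while the loss is strictly positive, so no deviation is profitable; conversely for $\delta<\kappa$ the gain $f(\delta)\varepsilon=\frac{\varepsilon}{2\kappa}$ strictly exceeds the loss $\frac{1}{2}f(0)\frac{\varepsilon}{2}=\frac{\varepsilon}{4\kappa}$, breaking convergence. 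The key point is that the profit comparison is governed by the same inequality $\frac{1}{2}f(0)$ versus $f(\delta)$, and the uniform density makes both sides piecewise constant, so the threshold is exactly $\delta=\kappa$. For partial differentiation ($\frac{\kappa}{2}<\delta<\kappa$), the symmetric candidate $(\delta-\kappa,\kappa-\delta)$ has its peripheral marginal consumer at $x+\delta=\kappa$, sitting exactly at the support boundary; I would check that local first-order indifference together with the constant density on the interior forces this to be the unique equilibrium, mirroring the log-concave proof.

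The substantive work, and the main obstacle, is Part~(ii) on full differentiation. Here I would show that when $\delta\leq\frac{\kappa}{2}$ any profile $(x_1,x_2)$ with $-\kappa+\delta\leq x_1$ and $x_1+2\delta\leq x_2\leq\kappa-\delta$ is an equilibrium. The crucial structural fact, special to the uniform case, is that \emph{both} attraction intervals $[x_i-\delta,x_i+\delta]$ lie entirely inside the support $[-\kappa,\kappa]$, so each firm already captures its full mass $2\delta\cdot\frac{1}{2\kappa}=\frac{\delta}{\kappa}$, the maximum attainable. A firm cannot increase its captured mass by any move: sliding within the support keeps the mass constant (flat density), sliding partly outside strictly decreases it, and moving toward the opponent only begins to overlap and share consumers without adding new ones. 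This is precisely why the equilibrium set is a two-dimensional continuum here rather than the one-parameter family $(m-\delta,m+\delta)$ of Proposition~3: under the uniform density there is no cost to separating the firms by \emph{more} than $2\delta$, because the density does not decay, whereas in the strictly-decreasing case moving a firm outward past $2\delta$ separation would sacrifice high-density central-side mass for lower-density peripheral mass. I would close the argument by checking the two boundary constraints: $x_1\geq-\kappa+\delta$ and $x_2\leq\kappa-\delta$ ensure each attraction zone stays within support (else mass is lost at the edge), and $x_2\geq x_1+2\delta$ ensures the zones are disjoint so the shared-consumer penalty never appears. Verifying that these inequalities are simultaneously necessary \emph{and} sufficient---that no firm has a profitable deviation at the extreme profiles---is the delicate bookkeeping step, but it reduces to the observation that full mass $\frac{\delta}{\kappa}$ is an absolute upper bound on any firm's payoff, already achieved, leaving no room for improvement.
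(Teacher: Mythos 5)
Your proposal follows essentially the same route as the paper: reduce everything to the comparison of $\tfrac{1}{2}f(0)$ with $f$ at the relevant marginal locations, and observe that in the full-differentiation regime the flat density lets both firms simultaneously attain the absolute payoff ceiling $\tfrac{\delta}{\kappa}$, which is exactly how the paper (Lemma \ref{le:unif_fulldiff}) establishes the two-dimensional continuum. Your deviation computations for the $(0,0)$ candidate and for the partial-differentiation candidate $(\delta-\kappa,\kappa-\delta)$ match the paper's Lemmas \ref{le:unif_00} and \ref{le:unif_partialdiff}, and your identification of why the boundary constraints $x_1\geq-\kappa+\delta$, $x_2\leq\kappa-\delta$, $x_2-x_1\geq 2\delta$ are both necessary and sufficient is correct.

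There is one genuine omission. For ``Proposition 1 extends'' you must also rule out convergent equilibria $(x,x)$ with $x\neq 0$, since Proposition \ref{prop:no differentiation}(ii) asserts uniqueness of the equilibrium at $(0,0)$. In the log-concave case this was dispatched by Lemma \ref{le:4claims}, whose proof leans on $f$ being \emph{strictly} decreasing on $\mathbb{R}^{+}$ and on Lemma \ref{le:PropertyF}; neither applies verbatim to the uniform density, and the paper needs a separate, not-entirely-trivial computation (Lemma \ref{le:unif_nodiff}) comparing $q_1(x,x)$ with $q_1(x-\varepsilon,x)$ and $q_1(-x,x)$, handling the cases where the attraction zone does or does not overhang the support. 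Relatedly, your argument implicitly confines candidate equilibria to $[-\kappa,\kappa]$ (the paper's Lemma \ref{le:support}); this is easy but should be stated, since firms are allowed to locate anywhere on $\mathbb{R}$. Neither point threatens the strategy, but both are needed for the uniqueness claims to carry over.
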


The proof of this proposition is provided in section \ref{Proof:Uniforrm}\
in the appendix. Proposition \ref{Proof:Uniforrm} shows that in the case of
a uniform distribution, the three regimes of no, partial, and full
differentiation still exist. The main difference is that now, in the case of
full differentiation, the firms can locate at a distance strictly larger
than $2\delta $ one from the other at equilibrium.

\section{Appendix\label{se:appendix}}

\subsection{Proof of Proposition \protect \ref{prop:no differentiation},
Proposition \protect \ref{prop:partial differentiation}, and Proposition 
\protect \ref{prop:full differentiation} (Characterization of equilibria) 
\label{se:proof}}

The proof of the propositions rely on the following lemmas.

\begin{lemma}
\label{le:PropertyF}~~\newline
If the density $f$ is symmetric around $0$ and strictly decreasing on $%
%TCIMACRO{\U{211d} }%
%BeginExpansion
\mathbb{R}
%EndExpansion
_{+}$, then the cumulative function $F$ satisfies the following properties:%
\newline
For any $\delta >0$,\newline
(1) if $x<0$, then $F(x+\delta )-F(x)>F(x)-F(x-\delta )$\newline
(2) if $x>0$, then $F(x+\delta )-F(x)<F(x)-F(x-\delta )$.
\end{lemma}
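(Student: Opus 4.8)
The plan is to prove both inequalities by writing the differences $F(x+\delta)-F(x)$ and $F(x)-F(x-\delta)$ as integrals of the density over intervals of equal length $\delta$, and then to compare the integrands pointwise using the symmetry and monotonicity of $f$. Concretely, I would write
\begin{equation*}
\left[F(x+\delta)-F(x)\right]-\left[F(x)-F(x-\delta)\right]=\int_0^\delta\left[f(x+s)-f(x-s)\right]\,ds,
\end{equation*}
obtained by substituting $t=x+s$ in the first integral and $t=x-s$ in the second, so that both are taken over $s\in[0,\delta]$. The whole statement then reduces to controlling the sign of the integrand $f(x+s)-f(x-s)$ for $s\in(0,\delta)$.

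The key step is a pointwise lemma about $f$ itself: for a density that is symmetric about $0$ and strictly decreasing on $\mathbb{R}^+$, I claim that $f(x+s)>f(x-s)$ whenever $x<0$ and $s>0$, and $f(x+s)<f(x-s)$ whenever $x>0$ and $s>0$. To see this in the case $x<0$, note that the two points $x+s$ and $x-s$ are symmetric about $x<0$; by the symmetry $f(u)=f(-u)$ and the fact that $f$ is strictly decreasing in $|u|$, the value $f(u)$ depends only on $|u|$ and is strictly decreasing in $|u|$. Since $x<0$, the point $x+s$ is strictly closer to $0$ than $x-s$ is (one checks $|x+s|<|x-s|$ precisely when $x<0$), so $f(x+s)>f(x-s)$. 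The case $x>0$ is the mirror image. I would state this comparison of $|x+s|$ versus $|x-s|$ as the elementary fact that it is, since it is just the observation that $|x+s|^2-|x-s|^2=4xs$ has the sign of $x$.

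Granting the pointwise claim, part (1) follows immediately: for $x<0$ the integrand $f(x+s)-f(x-s)$ is strictly positive on all of $(0,\delta)$, hence the integral is strictly positive, giving $F(x+\delta)-F(x)>F(x)-F(x-\delta)$. Symmetrically, for $x>0$ the integrand is strictly negative on $(0,\delta)$ and part (2) follows. The strictness of the inequality is guaranteed because the integrand is strictly signed on the open interval and continuous, so the integral cannot vanish.

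I do not expect a serious obstacle here; the only point that requires a little care is making the substitution that aligns the two integration ranges and then justifying strictness rather than weak inequality. It is worth emphasizing that log-concavity plays no role in this lemma — only symmetry and strict monotonicity on $\mathbb{R}^+$ are used — which is consistent with the hypotheses as stated. An alternative route would be to differentiate the function $x\mapsto\left[F(x+\delta)-F(x)\right]-\left[F(x)-F(x-\delta)\right]$ and study its sign, but the direct integral comparison is cleaner and avoids any appeal to differentiability of $f$ beyond continuity.
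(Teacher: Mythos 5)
Your proof is correct. It differs from the paper's argument in how the comparison is organized: the paper keeps the two differences as integrals over $[x,x+\delta]$ and $[x-\delta,x]$, splits each at the reflection points $-x$ and $-x-\delta$, cancels the matching pieces by symmetry, and then bounds what remains by $f(x)$ — which forces a case distinction between $x+\delta\leq -x$ and $x+\delta\geq -x$. You instead reparametrize both integrals over the common interval $[0,\delta]$ via $t=x\pm s$ and reduce everything to the single pointwise inequality $f(x+s)\gtrless f(x-s)$, settled by the identity $|x+s|^{2}-|x-s|^{2}=4xs$ together with the fact that $f(u)$ depends only on $|u|$ and is strictly decreasing in it. Your route avoids the case analysis entirely and makes the strictness transparent (the integrand is strictly signed on all of $(0,\delta)$, so the integral cannot vanish); the paper's route is more pedestrian but uses exactly the same two ingredients, symmetry and strict monotonicity on $\mathbb{R}^{+}$. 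Your observation that log-concavity is not needed here is also accurate — the lemma's hypotheses are exactly what your argument consumes.
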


\begin{proof}
\textbf{of Lemma \ref{le:PropertyF}}\newline
(1) Note that 
\begin{eqnarray*}
F(x+\delta )-F(x) &=&\int_{t=x}^{t=x+\delta
}f(t)dt=\int_{t=x}^{t=-x}f(t)dt+\int_{t=-x}^{t=x+\delta }f(t)dt \\
F(x)-F(x-\delta ) &=&\int_{t=x-\delta }^{t=x}f(t)dt=\int_{t=x-\delta
}^{t=-x-\delta }f(t)dt+\int_{t=-x-\delta }^{t=x}f(t)dt
\end{eqnarray*}%
Assume $x<0$.\newline
Consider first the case $x+\delta \leq -x$. Then for all $t\in \left[
x,x+\delta \right] $, $f(t)\geq f(x)$ with a strict inequality if $%
x<t<x+\delta $. Besides, for all $t\in \left[ x-\delta ,x\right] $, $%
f(t)\leq f(x)$ with a strict inequality if $t<x$. This shows that in that
case $F(x+\delta )-F(x)>F(x)-F(x-\delta )$.\newline
Consider now the case $x+\delta \geq -x$. By symmetry of $f$, $%
\int_{t=-x}^{t=x+\delta }f(t)dt=\int_{t=-x-\delta }^{t=x}f(t)dt$. For all $%
t\in \left[ x-\delta ,-x-\delta \right] $, $f(t) < f(x)$. And for all $t\in \left[ x,-x\right] $, $%
f(t)\geq f(x)$ with a strict inequality if $x<t<-x$. This shows that in that
case too $F(x+\delta )-F(x)>F(x)-F(x-\delta )$.\newline
(2) By symmetry, the proof is the same as for claim (1).
\end{proof}

Lemma \ref{le:4claims} provides a few useful remarks about the structure of
equilibria and best responses.

\begin{lemma}
\label{le:4claims}~~\newline
(1) $x_{2}=0$ is the unique best response to any $x_{1}$ such that $%
|x_{1}|\geq 2\delta $.\newline
(2) Any best response to $x_{1}\in \left[ -2\delta ,0\right[ $ belongs to
the interval $\left] x_{1},x_{1}+2\delta \right] $.\newline
(3) Any best response to $x_{1}\in \left] 0,2\delta \right] $ belongs to the
interval $\left[ x_{1}-2\delta ,x_{1}\right[ $.\newline
(4) Any best response to $x_{1}=0$ belongs to the interval $[-2\delta
,2\delta ]$.\newline
(5) Any equilibrium $(x_{1},x_{2})$ such that $x_{1}\leq x_{2}$ satisfies $%
x_{1}\in \lbrack -2\delta ,0]$ and $x_{2}\in \left[ 0,2\delta \right] $.
\end{lemma}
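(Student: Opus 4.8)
The plan is to first record the elementary fact that the ``window mass'' $g(x):=F(x+\delta)-F(x-\delta)$ is strictly increasing on $(-\infty,0)$, strictly decreasing on $(0,\infty)$, and hence uniquely maximized at $x=0$. This holds because $g'(x)=f(x+\delta)-f(x-\delta)$, and the symmetry and strict decrease of $f$ on $\mathbb{R}^{+}$ force $f(x+\delta)<f(x-\delta)$ for $x>0$ (and the reverse for $x<0$); the same computation shows that for every fixed half-width $L>0$ the map $c\mapsto F(c+L)-F(c-L)$ is strictly decreasing in $|c|$. Since a firm can never capture more consumers than lie in its own attraction zone, firm $2$ always earns at most $g(x_{2})$, with equality when its zone is disjoint (up to a null set) from firm $1$'s. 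Claim (1) is then immediate: if $|x_{1}|\geq 2\delta$ then at $x_{2}=0$ the two zones are disjoint, so firm $2$ earns exactly $g(0)=\max g$, whereas any $x_{2}\neq 0$ yields at most $g(x_{2})<g(0)$; hence $0$ is the unique best response. Claim (4) follows the same way: for $|x_{2}|>2\delta$ the zones are disjoint, firm $2$ earns $g(x_{2})$, and this is strictly less than $g(\pm2\delta)$, the payoff obtained by moving in to $x_{2}=\pm2\delta$, so no best response lies outside $[-2\delta,2\delta]$.

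For Claim (2), fix $x_{1}\in[-2\delta,0)$. That no best response exceeds $x_{1}+2\delta$ is easy: beyond that point the zones are disjoint, firm $2$ earns $g(x_{2})$, and since $x_{1}+2\delta\in[0,2\delta)$ while $g$ is strictly decreasing on $[0,\infty)$, moving in to $x_{2}=x_{1}+2\delta$ (where the zones just touch) strictly raises the payoff to $g(x_{1}+2\delta)$. The main obstacle is ruling out best responses with $x_{2}\leq x_{1}$, i.e.\ showing firm $2$ never wants to sit weakly to the left of firm $1$; I would do this by a reflection across $x_{1}$. In the overlap range $x_{1}-2\delta\leq x_{2}<x_{1}$, writing $x_{2}=x_{1}-d$ with $0<d\leq2\delta$, firm $2$'s capture equals $\int_{x_{1}-d-\delta}^{x_{1}-d/2}f$, whereas at the reflected point $x_{1}+d\in(x_{1},x_{1}+2\delta]$ its capture equals $\int_{x_{1}+d/2}^{x_{1}+d+\delta}f$. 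These intervals have equal length and midpoints $x_{1}-(\tfrac34 d+\tfrac{\delta}{2})$ and $x_{1}+(\tfrac34 d+\tfrac{\delta}{2})$; since $x_{1}<0$ the left midpoint is strictly farther from $0$, so by the $|c|$-monotonicity above the reflected (right) point earns strictly more. Hence each such left position is strictly dominated by a point of $(x_{1},x_{1}+2\delta]$. The remaining far-left range $x_{2}<x_{1}-2\delta$ gives disjoint zones with payoff $g(x_{2})$ and $|x_{2}|>|x_{1}+2\delta|$, so it is dominated by $x_{1}+2\delta$; and the coincidence $x_{2}=x_{1}$, where firm $2$ earns $\tfrac12 g(x_{1})$, is beaten by a small rightward move whose limiting payoff $F(x_{1}+\delta)-F(x_{1})$ exceeds $\tfrac12 g(x_{1})$ exactly because Lemma \ref{le:PropertyF}(1) gives $F(x_{1}+\delta)-F(x_{1})>F(x_{1})-F(x_{1}-\delta)$ for $x_{1}<0$. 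Altogether every best response to $x_{1}$ lies in $(x_{1},x_{1}+2\delta]$.

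Claim (3) is obtained from Claim (2) by the reflection $x\mapsto-x$, under which the game is invariant because $f$ is symmetric. Finally, Claim (5) is a bookkeeping consequence of the earlier parts together with this symmetry. For an equilibrium with $x_{1}\leq x_{2}$: if $x_{1}>2\delta$ then Claim (1) forces $x_{2}=0<x_{1}$, a contradiction, while if $0<x_{1}\leq2\delta$ then Claim (3) forces $x_{2}<x_{1}$, again contradicting $x_{1}\leq x_{2}$; hence $x_{1}\leq0$. If moreover $x_{1}<-2\delta$ then Claim (1) gives $x_{2}=0$, but then Claim (4) requires $x_{1}\in[-2\delta,2\delta]$, a contradiction; hence $x_{1}\in[-2\delta,0]$. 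Applying the identical argument to the reflected equilibrium $(-x_{2},-x_{1})$ gives $x_{2}\in[0,2\delta]$. The delicate step throughout is the reflection/dominance argument for Claim (2); once the $|c|$-monotonicity of the window mass and the half-window inequality of Lemma \ref{le:PropertyF} are in hand, the remaining claims are essentially combinatorial.
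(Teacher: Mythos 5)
Your proof is correct, and its skeleton matches the paper's: claim (1) from the fact that the window mass $g(x)=F(x+\delta)-F(x-\delta)$ is uniquely maximized at $0$; claim (2) by first discarding locations outside $[x_1-2\delta,x_1+2\delta]$, then eliminating the left half by a reflection, then excluding the coincidence $x_2=x_1$ via Lemma \ref{le:PropertyF}(1) applied to the limiting payoff $F(x_1+\delta)-F(x_1)$ of a small rightward move; claims (3)--(5) by symmetry and bookkeeping. The one step where you genuinely depart from the paper is the reflection in claim (2): the paper maps $x_2\in[x_1-2\delta,x_1[$ to $-x_2$ (reflection about the origin) and simply asserts $q_2(x_1,x_2)<q_2(x_1,-x_2)$ "because $f$ is symmetric," whereas you reflect about $x_1$, sending $x_1-d$ to $x_1+d$, and compare the two capture intervals explicitly: both have length $\delta+d/2$, their centers are symmetric about $x_1<0$, so the right one is strictly nearer the mode and carries strictly more mass by your preliminary observation that $c\mapsto F(c+L)-F(c-L)$ is strictly decreasing in $|c|$. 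Your variant buys two things: the dominating point $x_1+d$ automatically lands in the target interval $]x_1,x_1+2\delta]$ (the paper's $-x_2$ need not, which makes its concluding sentence slightly elliptical), and the dominance is verified rather than asserted; the paper's variant avoids computing capture intervals. You also shortcut the far-left range $x_2<x_1-2\delta$ by dominating it directly with $x_1+2\delta$ through the same $|c|$-monotonicity, instead of first pulling it in to $x_1-2\delta$ as the paper does. All the computations you rely on (the capture interval $[x_1-d-\delta,\,x_1-d/2]$ for $0<d\le 2\delta$, and the equivalence of $F(x_1+\delta)-F(x_1)>\tfrac12\bigl(F(x_1+\delta)-F(x_1-\delta)\bigr)$ with Lemma \ref{le:PropertyF}(1)) check out.
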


\begin{proof}
\textbf{of Lemma \ref{le:4claims}}\newline
(1) If $|x_{1}|\geq 2\delta $ then $q_{2}(x_{1},0)=F(\delta )-F(-\delta )$,
which is the strictly maximal feasible payoff since $f$ is symmetric around $%
0$ and strictly decreasing on $\mathbb{R}^{+}$.\newline
(2) Let $x_{1}\in \left[ -2\delta ,0\right[ $.\newline
First note that because $f$ is strictly decreasing on $\mathbb{R}^{+}$, if $%
x_{2}>x_{1}+2\delta $, then $q_{2}(x_{1},x_{2})<q_{2}(x_{1},x_{1}+2\delta )$%
, and if $x_{2}<x_{1}-2\delta $ then $q_{2}(x_{1},x_{2})<q_{2}(x_{1},x_{1}-2%
\delta )$. Therefore the best response belongs to the interval $%
[x_{1}-2\delta ,x_{1}+2\delta ]$.\newline
But, because $f$ is symmetric, if $x_{2}\in \lbrack x_{1}-2\delta ,x_{1}[$
then $q_{2}(x_{1},x_{2})<q_{2}(x_{1},-x_{2})$, which shows that the best
response belongs to the interval $[x_{1},x_{1}+2\delta ]$.\newline
It remains to show that $x_{2}=x_{1}$ cannot be a best response against $%
x_{1}\in \left[ -2\delta ,0\right[ $. Note that for $\varepsilon >0$ small
enough,%
\begin{eqnarray*}
q_{2}(x_{1},x_{1}) &=&\frac{F(x_{1}+\delta )-F(x_{1}-\delta )}{2} \\
q_{2}(x_{1},x_{1}+\varepsilon ) &=&F(x_{1}+\varepsilon +\delta )-F(x_{1}+%
\frac{\varepsilon }{2})
\end{eqnarray*}%
Therefore%
\begin{equation*}
\lim_{\substack{ \varepsilon \rightarrow 0  \\ \varepsilon >0}}%
q_{2}(x_{1},x_{1}+\varepsilon )-q_{2}(x_{1},x_{1})=\frac{F(x_{1}+\delta
)+F(x_{1}-\delta )}{2}-F(x_{1}),
\end{equation*}%
which by Lemma \ref{le:PropertyF} is positive since by assumption $x_{1}<0.$
This concludes the proof of claim (2).\newline
(3) By symmetry, the proof is the same as for claim (2).\newline
(4) Let $x_{1}=0$. If $x_{2}>2\delta $, then $q_{2}(0,x_{2})=F(x_{2}+\delta
)-F(x_{2}-\delta )$, which is strictly decreasing in $x_{2}$ for $x_{2}\in %
\left] 2\delta ,+\infty \right[ $. Therefore, $x_{2}>2\delta $ cannot be a
best response against $x_{1}=0$. By symmetry, $x_{2}<-2\delta $ cannot be a
best response against $x_{1}=0$.\newline
(5) Let $(x_{1},x_{2})$ be an equilibrium with $x_{1}\leq x_{2}$.\newline
Suppose first that $\left \vert x_{1}\right \vert >2\delta $. Then, $x_{2}=0$
according to claim (1), and claim (4) contradicts the fact that $x_{1}$ is a
best response to $x_{2}$, so it must be the case that $\left \vert
x_{1}\right \vert \leq 2\delta $.\newline
Suppose now that $0<x_{1}\leq 2\delta $. Then, by claim (3), $x_{2}<x_{1}$.
Since by assumption, $x_{1}\leq x_{2}$, it implies a contradiction.
Therefore $x_{1}\in \left[ -2\delta ,0\right] $.\newline
Similar arguments show that $x_{2}\in \left[ 0,2\delta \right] $, which
concludes the proof of claim (5).
\end{proof}

\bigskip

According to Lemma \ref{le:4claims}, at any equilibrium $(x_{1},x_{2})$ such
that $x_{1}\leq x_{2}$, we have that $x_{1}\in \lbrack -2\delta ,0]$ (claim
(5)) and Player 2's best response against $x_{1}$ belongs to the interval $%
\left] x_{1},x_{1}+2\delta \right] $ (claim (2)). Player 2's payoff when it
selects $x_{2}$ in this interval is:%
\begin{equation*}
q_{2}(x_{1},x_{2})=F(x_{2}+\delta )-F\left( \frac{x_{2}+x_{1}}{2}\right) 
\text{,}
\end{equation*}%
and 
\begin{equation}
\frac{\partial q_{2}}{\partial x_{2}}(x_{1},x_{2})=f(x_{2}+\delta )-\frac{1}{%
2}f\left( \frac{x_{2}+x_{1}}{2}\right).
\label{Derivative_q2}
\end{equation}
Note that when $x_{2}=x_{1}+2\delta $, we only compute a left derivative.

\begin{remark}
Before turning to the proofs of the propositions, let us introduce the
function $\Psi _{z}$, where for $t,z\in \mathbb{R}$, $\Psi _{z} $\ is
defined as follows: 
\begin{equation}
\Psi _{z}(t):=\frac{f(t)}{f(t+z)}.  \label{definition_psi}
\end{equation}%
Because of the log-concavity of $f$, we have that: 
%it is straightforward to check that:

\begin{itemize}
\item For any $z>0$, $t\longmapsto \Psi _{z}(t)$ is increasing in $t\in 
\mathbb{R}$,

\item For any $z<0$, $t\longmapsto \Psi _{z}(t)$ is decreasing in $t\in 
\mathbb{R}$.
\end{itemize}

Indeed, the log-concave function $f$ can be written $e^{g}$, where $g$ is a
concave function. Therefore $\Psi _{z}^{\prime }(t)=(g^{\prime
}(t)-g^{\prime}(t+z))e^{g(t)-g(t+z)}$ has the same sign than $g^{\prime
}(t)-g^{\prime }(t+z)$. The monotony of $\Psi_z$ follows from the fact that $%
g^{\prime }$ is decreasing.
\end{remark}

We are now ready to complete the proofs of propositions \ref{prop:no
differentiation}, \ref{prop:partial differentiation} and \ref{prop:full
differentiation}.

\subparagraph{\protect \bigskip}

\textbf{Proof of Propostion \ref{prop:no differentiation}}

Assume that $(x_{1},x_{2})$ is an equilibrium such that $x_{1}=x_{2}$. Lemma %
\ref{le:4claims} implies that $x_{1}=x_{2}=0$. Indeed, Claim (5) states that 
$x_{1}\in \lbrack -2\delta ,0]$, and Claim (2) states that any best response
to $x_{1}\in \left[ -2\delta ,0\right[ $ is strictly larger than $x_{1}$.
Therefore, there exists an equilibrium with no differentiation if and only
if $(0,0)$ is a Nash equilibrium. In that case, it is the unique equilibrium
with no differentiation.

It remains to show $(0,0)$ is a Nash equilibrium if and only if $\kappa \leq
\delta $. A necessary condition for $(0,0)$ to be a Nash equilibrium is that 
\begin{equation*}
\lim_{\substack{ x_{2}\rightarrow 0  \\ x_{2}>0}}\frac{\partial q_{2}}{%
\partial x_{2}}(0,x_{2})\leq 0,
\end{equation*}%
which given equation (\ref{Derivative_q2}) can be written as $f(\delta )\leq \frac{1}{%
2}f(0)$. This is exactly condition $\kappa \leq \delta $. Although the
function $x_2 \mapsto g_2(x_1,x_2)$ is in general discontinuous in $x_2=x_1$%
, it is continuous in the particular case where $x_1=0$ (because $f$ is
symmetric), so that it's enough to consider the derivative in $x_2>0$, $x_2
\rightarrow 0$.

Last, let us show that when condition $\kappa \leq \delta $ holds, $\frac{%
\partial q_{2}}{\partial x_{2}}(0,x_{2})<0$ for all $x_{2}\in \left]
0,2\delta \right] $, which will guarantee that $x_{2}=0$ is a best response
againt $x_{1}=0$. Given equation (\ref{Derivative_q2}), for $x_{2}\in \left]
0,2\delta \right] $: 
\begin{equation*}
\frac{\partial q_{2}}{\partial x_{2}}(0,x_{2})=f(x_{2}+\delta )-\frac{1}{2}f(%
\frac{x_{2}}{2}).
\end{equation*}
Note that when $x_{2}=2\delta $, we only compute a left derivative. We
have: 
\begin{equation*}
\frac{f(\frac{x_{2}}{2})}{f(x_{2}+\delta )}=\Psi _{\frac{x_{2}}{2}+\delta
}\left( \frac{x_{2}}{2}\right) \geq \Psi _{\frac{x_{2}}{2}+\delta }\left(
0\right) =\frac{f(0)}{f(\frac{x_{2}}{2}+\delta )}>\frac{f(0)}{f(\delta )},
\end{equation*}%
where the first inequality follows from the observation that $\frac{x_{2}}{2}%
+\delta >0$ and $\frac{x_{2}}{2}>0$, and the second inequality follows from
the fact that $0<\delta <\frac{x_{2}}{2}+\delta $ and $f$ is strictly
decreasing on $\mathbb{R}^{+}$. Since by assumption $f(\delta )\leq \frac{1}{%
2}f(0)$, this proves that $\frac{\partial q_{2}}{\partial x_{2}}(0,x_{2})<0$
for all $x_{2}\in \left] 0,2\delta \right] $,and $x_{2}=0$ is a best
response againt $x_{1}=0$.

The same argument shows that when $f(\delta )\leq \frac{1}{2}f(0)$, $x_{1}=0$
is a best response againt $x_{2}=0$.

Therefore, condition $\kappa \leq \delta $ is a necessary and sufficient
condition for $(0,0)$ to be a Nash equilibrium. This concludes the proof of
Proposition \ref{prop:no differentiation}.

\bigskip

\subparagraph{Proof of Proposition \protect \ref{prop:partial differentiation}%
.}

Assume that $x_1 \ leq x_2$ and that $(x_{1},x_{2})$ is an equilibrium
with partial differentiation, meaning that $0<x_{2}-x_{1}<2\delta $.

According to Lemma \ref{le:4claims}, at any equilibrium $(x_{1},x_{2})$ such
that $x_{1}\leq x_{2}$, it must be the case that $x_{1}\in \lbrack -2\delta
,0]\cap \left[ x_{2}-2\delta ,x_{2}\right[ $ and $x_{2}\in \lbrack -2\delta
,0]\cap \left] x_{1},x_{1}+2\delta \right] $. Since $x_{2}-2\delta
<x_{1}<x_{2}<x_{1}+2\delta $, the first-order conditions imply that $\frac{%
\partial q_{1}}{\partial x_{1}}\left( x_{1},x_{2}\right) =\frac{\partial
q_{2}}{\partial x_{2}}\left( x_{1},x_{2}\right) =0$, and therefore that: 
\begin{equation*}
f(x_{2}+\delta )=f(x_{1}-\delta ),
\end{equation*}%
which, when $x_{2}-x_{1}<2\delta $, is possible only if $x_{2}+x_{1}=0$: An
equilibrium with partial differentiation is necessarily symmetric.

Assume that $(-x_{2},x_{2})$ is a symmetric equilibrium with $x_{2}>0$. From
equation (\ref{Derivative_q2}), it must be the case that 
\begin{equation*}
\frac{\partial q_{2}}{\partial x_{2}}(-x_{2},x_{2})=0\Leftrightarrow
f(x_{2}+\delta )=\frac{1}{2}f\left( 0\right) .
\end{equation*}%
If $(-x_{2},x_{2})$ is an equilibrium partial differentiation, it must be
the case that $x_{2}<\delta $. By definition of $\kappa $ (see (\ref%
{Definition_kappa})), equation $f(x_{2}+\delta )=\frac{1}{2}f\left( 0\right) 
$ has a solution in $\left] 0,\delta \right[ $ if and only if $\frac{\kappa 
}{2}<\delta <\kappa $. In that case, the solution is unique and is $%
x_{2}=\kappa -\delta $.

Assume that $\frac{\kappa }{2}<\delta <\kappa $. It remains to show $(\delta
-\kappa ,\kappa -\delta )$ is a Nash equilibrium.

Let us first show that $x_{2}=\kappa -\delta $ is a best response against $%
x_{1}=\delta -\kappa $. It is sufficient to prove that:%
\begin{eqnarray*}
\frac{\partial q_{2}}{\partial x_{2}}(\delta -\kappa ,x_{2}) &>&0\text{ if }%
\delta -\kappa <x_{2}<\kappa -\delta \\
\frac{\partial q_{2}}{\partial x_{2}}(\delta -\kappa ,x_{2}) &<&0\text{ if }%
\kappa -\delta <x_{2}<3\delta -\kappa
\end{eqnarray*}

Given (\ref{Derivative_q2}), for $x_{2}\in \left] \delta -\kappa ,3\delta
-\kappa \right] $: 
\begin{equation*}
\frac{\partial q_{2}}{\partial x_{2}}(\delta -\kappa ,x_{2})=f(x_{2}+\delta
)-\frac{1}{2}f\left( \frac{\delta -\kappa +x_{2}}{2}\right).
\end{equation*}

%Note that when $x_{2}=x_{1}+2\delta $, we only compute a left derivative.

Consider first the case $\delta -\kappa <x_{2}<\kappa -\delta $. Note that%
\begin{equation*}
\frac{f(\frac{\delta -\kappa +x_{2}}{2})}{f\left( x_{2}+\delta \right) }%
=\Psi _{\frac{\delta +\kappa +x_{2}}{2}}\left( \frac{\delta -\kappa +x_{2}}{2%
}\right) \leq \Psi _{\frac{\delta +\kappa +x_{2}}{2}}\left( 0\right) =\frac{%
f(0)}{f(\frac{\delta +\kappa +x_{2}}{2})}<\frac{f(0)}{f(\kappa )},
\end{equation*}%
where the first inequality follows from the observation that $\frac{\delta
+\kappa +x_{2}}{2}>0$ and $\frac{\delta -\kappa +x_{2}}{2}<0$, and the
second inequality follows from the fact that $0<\frac{\delta +\kappa +x_{2}}{%
2}<\kappa $ and $f$ is strictly decreasing on $%
%TCIMACRO{\U{211d} }%
%BeginExpansion
\mathbb{R}
%EndExpansion
^{+}$. Since by assumption $f(\kappa )=\frac{1}{2}f(0)$, this proves that $%
\frac{\partial q_{2}}{\partial x_{2}}(0,x_{2})>0$ for all $x_{2}$ such that $%
\delta -\kappa <x_{2}<\kappa -\delta $.

Consider now the case $\kappa -\delta <x_{2}<3\delta -\kappa $. Note that%
\begin{equation*}
\frac{f(\frac{\delta -\kappa +x_{2}}{2})}{f\left( x_{2}+\delta \right) }%
=\Psi _{\frac{\delta +\kappa +x_{2}}{2}}\left( \frac{\delta -\kappa +x_{2}}{2%
}\right) \geq \Psi _{\frac{\delta +\kappa +x_{2}}{2}}\left( 0\right) =\frac{%
f(0)}{f(\frac{\delta +\kappa +x_{2}}{2})}>\frac{f(0)}{f(\kappa )},
\end{equation*}%
where the first inequality follows from the observation that $\frac{\delta
+\kappa +x_{2}}{2}>0$ and $\frac{\delta -\kappa +x_{2}}{2}>0$, and the
second inequality follows from the fact that $0<\kappa <\frac{\delta +\kappa
+x_{2}}{2}$ and $f$ is strictly decreasing on $%
%TCIMACRO{\U{211d} }%
%BeginExpansion
\mathbb{R}
%EndExpansion
^{+}$. Since by assumption $f(\kappa )=\frac{1}{2}f(0)$, this proves that $%
\frac{\partial q_{2}}{\partial x_{2}}(0,x_{2})<0$ for all $x_{2}$ such that $%
\kappa -\delta <x_{2}<3\delta -\kappa $.

This shows that $x_{2}=\kappa -\delta $ is a best response against $%
x_{1}=\delta -\kappa $.

A symmetric argument shows that $x_{1}=\delta -\kappa $ is a best response
against $x_{2}=\kappa -\delta $

Proposition \ref{prop:partial differentiation} is proved.

\bigskip

\subparagraph{Proof of proposition \protect \ref{prop:full differentiation}.}

It follows from Lemma \ref{le:4claims} that any equilibrium with full
differentiation is necessarily of the form $(a-\delta ,a+\delta )$ for some $%
a\in \lbrack -\delta ,\delta ]$. Besides, $(a-\delta ,a+\delta )$ is an
equilibrium only if: 
\begin{eqnarray*}
\lim_{\substack{ x_{1}\rightarrow a-\delta  \\ x_{1}>a-\delta }}\frac{%
\partial q_{1}}{\partial x_{1}}\left( x_{1},a+\delta \right) &=&\frac{1}{2}%
f\left( a\right) -f\left( a-2\delta \right) \leq 0\Leftrightarrow \Psi
_{-2\delta }(a)\leq 2, \\
\lim_{\substack{ x_{2}\rightarrow a+\delta  \\ x_{2}<a+\delta }}\frac{%
\partial q_{2}}{\partial x_{2}}\left( a-\delta ,x_{2}\right) &=&f\left(
a+2\delta \right) -\frac{1}{2}f\left( a\right) \geq 0\Leftrightarrow \Psi
_{2\delta }(a)\leq 2,
\end{eqnarray*}%
where $\Psi _{z}(.)$ is defined by (\ref{definition_psi}). Therefore, a
necessary condition for $(a-\delta ,a+\delta )$ to be an equilibrium is that 
$\max \left( \Psi _{-2\delta }(a),\Psi _{2\delta }(a)\right) \leq 2$.

Note that because of the logconcavity of $f$, $t\mapsto \Psi _{-2\delta }(t)$
is a decreasing function on $%
%TCIMACRO{\U{211d} }%
%BeginExpansion
\mathbb{R}
%EndExpansion
$ and $t\mapsto \Psi _{2\delta }(t)$\ is an increasing function on $%
%TCIMACRO{\U{211d} }%
%BeginExpansion
\mathbb{R}
%EndExpansion
$. Besides, $\Psi _{-2\delta }(0)=\Psi _{2\delta }(0).$ Therefore 
\begin{equation*}
\max \left( \Psi _{-2\delta }(a),\Psi _{2\delta }(a)\right) =\left \{ 
\begin{array}{c}
\Psi _{2\delta }(a)\text{ if }a\geq 0 \\ 
\Psi _{-2\delta }(a)\text{ if }a\leq 0%
\end{array}%
\right.
\end{equation*}%
Since $\Psi _{-2\delta }(a)=\Psi _{2\delta }(-a)$, then $\max \left( \Psi
_{-2\delta }(a),\Psi _{2\delta }(a)\right) =\Psi _{2\delta }(\left \vert
a\right \vert ).$

Note that there exists $a\in \lbrack -\delta ,\delta ]$ such that $\Psi
_{2\delta }(\left \vert a\right \vert )\leq 2$ if and only if $\Psi
_{2\delta }(0)\leq 2\Leftrightarrow 2\delta \leq \kappa $.

If this condition holds, $\alpha (f,\delta )$ defined in \ref%
{Definition_alpha} exists and is uniquely defined, and $(-\delta +a,+\delta
+a)$ is an equilibrium only if $a\in \lbrack -\alpha (f,\delta ),\alpha
(f,\delta )]$.

Note that the case $a=0$ is the unique symmetric equilibria in this class.

Assume that $2\delta \leq \kappa $ and consider $x_{1}\in \lbrack -\delta
-\alpha (f,\delta ),-\delta +\alpha (f,\delta )]$. Let us show that $\frac{%
\partial q_{2}}{\partial x_{2}}(x_{1},x_{2})>0$ for all $x_{2}\in \left]
x_{1},x_{1}+2\delta \right] $, which will guarantee that $%
x_{2}=x_{1}+2\delta $ is a best response againt $x_{1}$. Given (\ref%
{Derivative_q2}), for $x_{2}\in \left] x_{1},x_{1}+2\delta \right[ $: 
\begin{equation*}
\frac{\partial q_{2}}{\partial x_{2}}(x_{1},x_{2})=f(x_{2}+\delta )-\frac{1}{%
2}f(\frac{x_{1}+x_{2}}{2}).\text{\footnote{%
Note that when $x_{2}=x_{1}+2\delta $, we only compute a left derivative.}}
\end{equation*}%
Note that%
\begin{equation*}
\frac{f(\frac{x_{1}+x_{2}}{2})}{f(x_{2}+\delta )}=\Psi _{\delta +\frac{%
x_{2}-x_{1}}{2}}\left( \frac{x_{1}+x_{2}}{2}\right) \leq \Psi _{\delta +%
\frac{x_{2}-x_{1}}{2}}\left( x_{1}+\delta \right) =\frac{f(x_{1}+\delta )}{f(%
\frac{x_{1}+x_{2}}{2}+2\delta )}<\frac{f(x_{1}+\delta )}{f(x_{1}+\delta
+2\delta )},
\end{equation*}%
where the first inequality follows from the observation that $\delta +\frac{%
x_{2}-x_{1}}{2}>0$ and $\frac{x_{1}+x_{2}}{2}<x_{1}+\delta $, the second
inequality follows from the fact that $0\leq \frac{x_{1}+x_{2}}{2}+2\delta
<\left( x_{1}+\delta \right) +2\delta $ and $f$ is strictly decreasing on $%
%TCIMACRO{\U{211d} }%
%BeginExpansion
\mathbb{R}
%EndExpansion
^{+}$.

Note also that%
\begin{equation*}
\frac{f(x_{1}+\delta )}{f(x_{1}+\delta +2\delta )}=\Psi _{2\delta }\left(
x_{1}+\delta \right) \leq \Psi _{2\delta }\left( \alpha (f,\delta )\right) =%
\frac{f(\alpha (f,\delta ))}{f(\alpha (f,\delta )+2\delta )}\leq 2
\end{equation*}%
where the first inequality follows from the observation that $2\delta >0$
and $x_{1}+\delta \leq \alpha (f,\delta )$, and the second inequality
follows from the definition of $\alpha (f,\delta )$ (see (\ref%
{Definition_alpha})).

This proves that $\frac{\partial q_{2}}{\partial x_{2}}(x_{1},x_{2})<0$ for
all $x_{2}\in \left] x_{1},x_{1}+2\delta \right] $,and $x_{2}=x_{1}+2\delta $
is a best response againt $x_{1}$.

This concludes the proof of Proposition \ref{prop:full differentiation}.

\subsection{Proof of Proposition \protect \ref{prop:efficiency} (Efficiency) 
\label{Proof_efficiency}}

\textbf{Claim (1): Consumers' surplus.}

Suppose that firms' locations are $x_{1}$ and $x_{2}$. When $x_{1}\leq x_{2}$%
, the consumers' surplus is:%
\begin{eqnarray*}
&&CS(x_{1},x_{2}) \\
&:&=\left \{ 
\begin{array}{l}
\displaystyle \int_{x_{1}-\delta }^{\frac{x_{1}+x_{2}}{2}%
}(v-p-c(|x_{1}-t|))f(t)dt+\int_{\frac{x_{1}+x_{2}}{2}}^{x_{2}+\delta
}(v-p-c(|x_{2}-t|))f(t)dt\text{ if }|x_{2}-x_{1}|\leq 2\delta , \\ 
~~ \\ 
\displaystyle \int_{x_{1}-\delta }^{x_{1}+\delta
}(v-p-c(|x_{1}-t|))f(t)dt+\int_{x_{2}-\delta }^{x_{2}+\delta
}(v-p-c(|x_{2}-t|))f(t)dt\text{ if }|x_{2}-x_{1}|\geq 2\delta .%
\end{array}%
\right.
\end{eqnarray*}

Because players are anonymous, we have $CS(x_1,x_2)=CS(x_2,x_1)$. Therefore,
the previous expression also holds for $x_{1}\geq x_{2}$.

Note first that there exists a profile that maximizes the consumers'
surplus. Indeed, the surplus is a continuous function and because $f(x)$ goes to zero
as $|x|$ goes to $+\infty$, we can restrict the analysis of $CS(x_1,x_2)$ on
a compact subset of $\mathbb{R} \times \mathbb{R}$.

Remark that a situation with full convergence cannot be optimum. Indeed, for
any $x,y\in \mathbb{R}$, $x\neq y$, $CS(x,y)>CS(x,x)$.\newline

It remains to show that a situation where $|x_{2}-x_{1}|\geq 2\delta $
cannot be an optimum. Straightforward computations show that when firms'
locations are $x_{1}$ and $x_{2}$, with $x_{1}\leq x_{2}$ and $%
|x_{2}-x_{1}|\geq 2\delta $, then:\textit{\ }%
\begin{eqnarray*}
\lim_{\substack{ \varepsilon \rightarrow 0  \\ \varepsilon >0}}\frac{%
CS(x_{1}+\varepsilon ,x_{2})-CS(x_{1},x_{2})}{\varepsilon }
&=&\int_{0}^{\delta }c^{\prime }(s)\left[ f(x_{1}+s)-f(x_{1}-s)\right] ds, \\
\lim_{\substack{ \varepsilon \rightarrow 0  \\ \varepsilon >0}}\frac{%
CS(x_{1},x_{2}-\varepsilon )-CS(x_{1},x_{2})}{\varepsilon }
&=&\int_{0}^{\delta }c^{\prime }(s)\left[ f(x_{2}-s)-f(x_{2}+s)\right] ds.
\end{eqnarray*}

Note that if $x_{1}<0$, for any $s$ such that $0\leq s\leq \delta $, $%
f(x_{1}+s)-f(x_{1}-s)>0$. Indeed, if $x_{1}+s\leq 0$, it is true since $f$
is increasing on $\mathbb{R}_{-}$. And if $x_{1}+s\geq 0$: $%
f(x_{1}+s)-f(x_{1}-s)>0\Leftrightarrow x_{1}+s<\left \vert
x_{1}-s\right
\vert \Leftrightarrow x_{1}<0$. Therefore if $x_{1}<0$, then $%
f(x_{1}+s)-f(x_{1}-s)>0$ and $\lim_{\substack{ \varepsilon \rightarrow 0  \\ %
\varepsilon >0}}\frac{CS(x_{1}+\varepsilon ,x_{2})-CS(x_{1},x_{2})}{%
\varepsilon }>0$: the consumer surplus would increase if Firm 1 were to move
closer to Firm 2.

Consider now the case $x_{1}\geq 0$. Since $|x_{2}-x_{1}|\geq 2\delta $, it
implies that $x_{2}>0$. A similar argument shows that if $x_{2}>0$, for any $%
s$ such that $0\leq s\leq \delta $, $f(x_{2}+s)-f(x_{2}-s)<0\ $and $\lim 
_{\substack{ \varepsilon \rightarrow 0  \\ \varepsilon >0}}\frac{%
CS(x_{1},x_{2}-\varepsilon )-CS(x_{1},x_{2})}{\varepsilon }>0$: the consumer
surplus would increase if Firm 2 were to move closer to Firm 1.

This completes the proof of part 1.

\bigskip

\textbf{Claim (2): Aggregate profit.}

Assume the following conditions hold: (i) $\gamma _{1}=\gamma _{2}=\gamma $;
(ii) $\gamma ^{\prime \prime }\geq 0$; (iii) $\gamma ^{\prime }(1)<p$. For $%
q\in \left[ 0,1\right] $ and $\alpha \in \left[ 0,1\right] $, denote by $\Pi
(q,\alpha )$ the aggregate profit when total production is $q$ and Firm 1
realizes a share $\alpha $ of the total production (the remaining share
being produce by Firm 2): 
\begin{equation*}
\Pi (q,\alpha )=pq-\gamma \left( \alpha q\right) -\gamma \left( (1-\alpha
)q\right) .
\end{equation*}

Then%
\begin{equation*}
\frac{\partial \Pi }{\partial q}(q,\alpha )=p-\alpha \gamma ^{\prime }\left(
\alpha q\right) -(1-\alpha )\gamma ^{\prime }\left( (1-\alpha )q\right) .
\end{equation*}%
Since $\gamma ^{\prime }(x)<p$ for all $x\in \left[ 0,1\right] $, $\frac{%
\partial \Pi }{\partial q}(q,\alpha )>0$: aggregate profit is increasing
with the aggregate output.

Note also that: 
\begin{eqnarray*}
\frac{\partial \Pi }{\partial \alpha }(q,\alpha ) &=&-q\gamma ^{\prime
}\left( \alpha q\right) +q\gamma ^{\prime }\left( (1-\alpha )q\right) \\
\frac{\partial ^{2}\Pi }{\partial \alpha ^{2}}(q,\alpha ) &=&-q^{2}\gamma
^{\prime \prime }\left( \alpha q\right) -q^{2}\gamma ^{\prime \prime }\left(
(1-\alpha )q\right) \leq 0,
\end{eqnarray*}%
therefore for all $\alpha \in \left[ 0,1\right] $,%
\begin{equation*}
\Pi (q,\alpha )\leq \Pi \left( q,\frac{1}{2}\right) ,
\end{equation*}%
which means that fixing the total output $q$, an equal sharing of the
production is efficient (there is no way to make costs strictly lower).

Since the profile of location $(-\delta ,+\delta )$ is the unique profile
which maximizes total sales, and since it is symmetric, it is the unique
solution of the aggregate profit maximization program. Which concludes the
proof of Part 2.

\subsection{\label{Proof:Uniforrm}Proof of Proposition \protect \ref%
{prop:unif} (Uniform case).}

Assume that consumers are uniformly distributed on the $\left[ -\kappa
,\kappa \right] $ interval. Firms can choose any location on the real line.

The proof straightforwardly follows from the following five lemmas.

\begin{lemma}
\label{le:support}~~\newline
\textit{If }$\left( x_{1},x_{2}\right) $\textit{\ is an equilibrium, then,
necessarily, }$x_{1},x_{2}\in \left[ -\kappa ,\kappa \right] .$
\end{lemma}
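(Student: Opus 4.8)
The plan is to show that any equilibrium must have both firms located inside the support $[-\kappa,\kappa]$ of the distribution, by arguing that locating strictly outside the support is never a best response. The key observation is that in the uniform case, a firm's potential attraction zone is the interval $[x_i-\delta, x_i+\delta]$, but consumers only exist on $[-\kappa,\kappa]$, so any portion of a firm's attraction zone lying outside the support captures no mass. This creates a clear incentive to move any firm located outside the support back toward it.

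First I would suppose, for contradiction, that $(x_1,x_2)$ is an equilibrium with (say, by the anonymity/symmetry of the problem) $x_2 > \kappa$. I would then examine whether firm $2$ can profitably deviate. The intuition is that if $x_2 > \kappa$, the right-hand part of firm $2$'s attraction zone, namely $(\kappa, x_2+\delta]$, is empty of consumers, so firm $2$ is wasting reach on the right. By shifting $x_2$ slightly to the left (toward the support), firm $2$ either gains consumers on its left boundary $\max\{(x_1+x_2)/2,\, x_2-\delta\}$ or at worst keeps the same mass, while losing nothing on the right where there were no consumers to begin with. More carefully, I would split into cases according to whether $x_2-\delta \geq \kappa$ (the whole zone is to the right of the support, giving firm $2$ zero payoff, an obvious non-best-response since locating at $0$ yields positive payoff), or $x_2 - \delta < \kappa$ (part of the zone still overlaps the support). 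In the overlapping case the marginal effect of decreasing $x_2$ is to extend the captured interval leftward at a point with positive density $1/(2\kappa)$ while the right endpoint moves out of (or stays out of) the support, so the derivative of $q_2$ with respect to a leftward move is strictly positive.

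The same argument applied symmetrically rules out $x_1 < -\kappa$, and the case $x_2 < -\kappa$ (or $x_1 > \kappa$) is even easier since such a firm is then strictly on the far side and can strictly improve by moving toward the mass. I would therefore conclude that at any equilibrium both $x_1$ and $x_2$ lie in $[-\kappa,\kappa]$.

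The main obstacle I anticipate is handling the boundary bookkeeping cleanly: the expression for $q_2$ depends on the relative order of $x_1$, $x_2$, and on where the attraction-zone endpoints fall relative to $\pm\kappa$ and relative to the midpoint $(x_1+x_2)/2$. The discontinuity of the payoff at $x_1=x_2$ and the need to track the correct left/right one-sided derivatives make the case analysis the delicate part. I expect that the simplest clean treatment is to argue directly with the payoff \emph{levels} rather than derivatives: show explicitly that for any $x_2>\kappa$ there is a nearby location $x_2'<x_2$ with $q_2(x_1,x_2')\geq q_2(x_1,x_2)$ and strict somewhere, using that the capturable mass is a monotone function of how much of $[x_i-\delta,x_i+\delta]$ overlaps $[-\kappa,\kappa]$ on the firm's ``own'' side of the midpoint. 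This level-based comparison sidesteps the one-sided-derivative subtleties entirely.
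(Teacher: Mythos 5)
Your overall strategy is the same as the paper's: show that a firm located outside $[-\kappa,\kappa]$ admits a profitable deviation toward the support, treating separately the degenerate case where its attraction zone misses the support entirely. The paper's only structural differences are that it first establishes that both firms earn strictly positive payoff at any equilibrium (each can secure a positive payoff by locating at $0$), which confines locations to $]-\kappa-\delta,\kappa+\delta[$ before the local argument, and that in the tie case $x_{1}=x_{2}<-\kappa$ it uses a jump to the mirror position $-x_{1}$ rather than a small move.

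There is, however, one step in your plan that fails as stated. In your ``overlapping'' case ($x_{2}-\delta<\kappa$, $x_{2}>\kappa$) you claim that a leftward move of firm $2$ ``extend[s] the captured interval leftward at a point with positive density $1/(2\kappa)$,'' so that the one-sided derivative is strictly positive. This is false when the left boundary $\max\{(x_{1}+x_{2})/2,\,x_{2}-\delta\}$ of firm $2$'s captured set already lies at or below $-\kappa$: then firm $2$ captures all of $[-\kappa,\kappa]$, a leftward move extends its interval into a region of zero density, and its payoff is constant (it is already maximal). This configuration is reachable within the hypotheses of the lemma --- e.g.\ $\delta>2\kappa$, $x_{2}$ slightly above $\kappa$ and $x_{1}$ far to the left so that $(x_{1}+x_{2})/2\leq-\kappa$ --- and your own phrase ``or at worst keeps the same mass'' concedes it, but a weakly improving deviation yields no contradiction with equilibrium. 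The contradiction in that configuration must come from the \emph{other} firm: firm $1$'s market is contained in $(-\infty,-\kappa]$, so it earns zero and strictly gains by relocating to $0$. The cleanest repair is exactly the paper's opening observation that every equilibrium gives both firms strictly positive payoff; with that in hand, the sub-case where your derivative vanishes is excluded outright, and the rest of your case analysis goes through.
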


\begin{proof}
\textbf{of Lemma \ref{le:support}}\newline
Note first that at equilibrium, both firms receive a positive payoff.
Indeed, a firm could secure a positive payoff by moving to the center $0$.
This remark proves that at equilibrium, $x_{1},x_{2}\in \left] -\kappa
-\delta ,\kappa +\delta \right[ $.\newline
Assume that $\left( x_{1},x_{2}\right) $ is an equilibrium, with $x_{1}\leq
x_{2}$.\newline
Assume that $-\kappa -\delta \leq x_{1}<-\kappa $. If $x_{1}=x_{2}$, Firm 2
could strictly increase its payoff by moving to position $-x_{1}$, which
contradicts the fact that $\left( x_{1},x_{2}\right) $ is an equilibrium. If 
$x_{1}<x_{2}$, Firm 1 could strictly increase its payoff by moving slightly
closer to Firm 2, which again contradicts the fact that $\left(
x_{1},x_{2}\right) $ is an equilibrium.\newline
By symmetry, there can be no equilibria where $\kappa <x_{2}\leq \kappa
+\delta .$\newline
This concludes the proof of Lemma \ref{le:support}.
\end{proof}

\begin{lemma}
\label{le:unif_fulldiff}~~\newline
\textit{There exists an equilibrium where the firms locate at distance at
least }$2\delta $\textit{\ one from the other if and only if }$\delta \leq 
\frac{\kappa }{2}$\textit{. In that case, there exists a continuum of
equilibria. More specifically, supposing without loss of generality that }$%
x_{1}\leq x_{2}$\textit{, the whole set of equilibria is }$\left(
x_{1},x_{2}\right) $\textit{\ such that }$-\kappa +\delta \leq x_{1}\leq
x_{1}+2\delta \leq x_{2}\leq \kappa -\delta .$
\end{lemma}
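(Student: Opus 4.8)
The plan is to exploit the fact that under a uniform density $f=\frac{1}{2\kappa}\mathbf{1}_{[-\kappa,\kappa]}$, a firm's payoff is just $\frac{1}{2\kappa}$ times the length of the portion of its served interval lying in $[-\kappa,\kappa]$, and that in the full-differentiation regime (firms at distance at least $2\delta$, so the two attraction zones overlap in at most a point) each firm simply collects the mass of its own attraction zone $[x_i-\delta,x_i+\delta]\cap[-\kappa,\kappa]$. I would first record the key a priori bound: whatever the opponent does, no firm can ever earn more than $\frac{\delta}{\kappa}$, since its payoff is at most the mass of an interval of length $2\delta$; and this bound is attained exactly when the attraction zone sits entirely inside the support, i.e.\ when $x_i\in[-\kappa+\delta,\kappa-\delta]$.

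Sufficiency (every profile of the stated form is an equilibrium) then becomes immediate and needs no deviation analysis: if $-\kappa+\delta\le x_1\le x_1+2\delta\le x_2\le\kappa-\delta$, both zones lie inside $[-\kappa,\kappa]$ and are disjoint up to a point, so each firm earns exactly $\frac{\delta}{\kappa}$, the global upper bound on its payoff. Since neither firm can beat its best-possible payoff by any move, the profile is a Nash equilibrium.

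For the converse (characterising all distance-$\ge 2\delta$ equilibria), let $(x_1,x_2)$ with $x_1\le x_2$ and $x_2-x_1\ge 2\delta$ be such an equilibrium; by Lemma \ref{le:support}, $x_1,x_2\in[-\kappa,\kappa]$. I would show $x_1\ge -\kappa+\delta$ (and symmetrically $x_2\le\kappa-\delta$) by contradiction: if $x_1<-\kappa+\delta$, Firm~1's zone pokes out of the support on the left, so its payoff is $\frac{x_1+\delta+\kappa}{2\kappa}<\frac{\delta}{\kappa}$, and I would exhibit a profitable local move to $x_1+\varepsilon$.

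The delicate point—the step I expect to be the main obstacle—is that this rightward nudge must be shown profitable in two subcases. When $x_2-x_1>2\delta$ the zones stay disjoint for small $\varepsilon$ and the payoff rises to $\frac{x_1+\varepsilon+\delta+\kappa}{2\kappa}$; but when $x_2-x_1=2\delta$ exactly, the nudge creates an overlap, so one must recompute Firm~1's catchment as $[\,x_1+\varepsilon-\delta,\ \tfrac{(x_1+\varepsilon)+x_2}{2}\,]\cap[-\kappa,\kappa]$ and verify that its length still strictly increases (in fact by $\varepsilon/2$). Once $x_1\ge-\kappa+\delta$ and $x_2\le\kappa-\delta$ are in hand, together with $x_2-x_1\ge 2\delta$ these are exactly the stated inequalities. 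Finally, the existence/``iff $\delta\le\kappa/2$'' clause follows by feasibility: the system $-\kappa+\delta\le x_1$, $x_1+2\delta\le x_2\le\kappa-\delta$ admits a solution precisely when $-\kappa+3\delta\le\kappa-\delta$, i.e.\ $\delta\le\frac{\kappa}{2}$.
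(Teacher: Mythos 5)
Your proposal is correct and follows essentially the same route as the paper's proof: restrict to $[-\kappa,\kappa]$ via Lemma \ref{le:support}, force $x_1\geq -\kappa+\delta$ and $x_2\leq\kappa-\delta$ by a local deviation, deduce $2\delta\leq\kappa$ from feasibility, and verify sufficiency by noting each firm attains the global payoff bound $\frac{\delta}{\kappa}$. You in fact spell out the deviation argument (including the overlap subcase when $x_2-x_1=2\delta$) in more detail than the paper does.
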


\begin{proof}
\textbf{of Lemma \ref{le:unif_fulldiff}}\newline
By Lemma \ref{le:support}, one can restrict attention to $\left(
x_{1},x_{2}\right) $ such that $-\kappa \leq x_{1}\leq x_{1}+2\delta \leq
x_{2}\leq \kappa $. Note that it must be the case that $x_{2}\leq \kappa
-\delta $. Otherwise, Firm 2 could strictly increase its payoff by moving
slightly to the left. Similarly, it must be the case that $x_{1}\geq -\kappa
+\delta $. These two conditions, together with the fact that $x_{1}+2\delta
\leq x_{2}$ implies that $2\delta \leq \kappa $.\newline
Last, note that if $2\delta \leq \kappa $, any $\left( x_{1},x_{2}\right) $
such that $-\kappa +\delta \leq x_{1}\leq x+2\delta \leq x_{2}\leq \kappa
-\delta $ gives both firms the maximal possible payoff ($\frac{\delta }{%
\kappa }$), and is thus an equilibrium.
\end{proof}

\begin{lemma}
\label{le:unif_partialdiff}~~\newline
\textit{There exists an equilibrium where the firms locate at distance less
than }$2\delta $\textit{\ one from the other without converging if and only
if }$\frac{1}{2}<\frac{\delta }{\kappa }<1$\textit{. If this condition
holds, the unique equilibrium is }$\left( \delta -\kappa ,\kappa -\delta
\right) $\textit{\ (up to a permutation of the players).}
\end{lemma}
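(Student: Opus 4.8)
The plan is to carry out the same one-dimensional best-response analysis as in the log-concave case, but to exploit that the uniform density is \emph{flat} on its support, so that the machinery of the ratio $\Psi_z$ and log-concavity is unnecessary here; instead every binding condition will come from the edges of the support $[-\kappa,\kappa]$. Using Lemma~\ref{le:support} I restrict to $x_1,x_2\in[-\kappa,\kappa]$ and consider a candidate partial-differentiation profile $x_1\le x_2$ with $0<x_2-x_1<2\delta$. With $f\equiv\frac{1}{2\kappa}$ on $[-\kappa,\kappa]$ and $0$ outside, the right firm's payoff is $q_2=F(\min(x_2+\delta,\kappa))-F(\tfrac{x_1+x_2}{2})$, and $\frac{\partial q_2}{\partial x_2}$ equals $\frac{1}{4\kappa}>0$ while its outer edge is interior ($x_2+\delta<\kappa$) and $-\frac{1}{4\kappa}<0$ once $x_2+\delta>\kappa$. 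The first step is thus to read off that, in the overlap regime, each firm strictly gains by pushing its \emph{outer} edge outward until it reaches the support boundary: the right firm's unique maximizer over the overlap interval is $x_2=\kappa-\delta$, and symmetrically the left firm's is $x_1=\delta-\kappa$.

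The second step extracts the candidate and the parameter window. Since any partial-differentiation equilibrium must place each firm at its edge-constrained position, the only candidate is $(\delta-\kappa,\kappa-\delta)$, which already yields uniqueness up to permutation. Requiring genuine partial differentiation, $0<x_2-x_1=2(\kappa-\delta)<2\delta$, is equivalent to $\frac{\kappa}{2}<\delta<\kappa$, i.e. $\frac12<\frac{\delta}{\kappa}<1$, giving the ``only if'' direction. A point to handle carefully here is that the interior maximizer $\kappa-\delta$ must actually lie in the overlap interval $(x_1,x_1+2\delta)$; this is precisely where $\delta>\frac{\kappa}{2}$ re-enters, because if $\delta\le\frac{\kappa}{2}$ the overlap payoff is increasing throughout and the firm is dragged all the way to full differentiation, so no partial-differentiation equilibrium can survive.

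For the converse I assume $\frac12<\frac{\delta}{\kappa}<1$ and check that $(\delta-\kappa,\kappa-\delta)$ is a global, not merely local, equilibrium. At this profile the zones $[-\kappa,2\delta-\kappa]$ and $[\kappa-2\delta,\kappa]$ cover $[-\kappa,\kappa]$ and overlap in an interval centered at $0$, so the midpoint $0$ splits the market and each firm earns $\frac12$. I then dispose of every deviation of the right firm against $x_1=\delta-\kappa$ by cases: convergence yields $\frac{\delta}{2\kappa}<\frac12$; any overlapping position yields at most the interior value $\frac12$; moving to the left of the opponent captures only a strict subset of the left slice, hence less than $\frac{\delta}{2\kappa}$; and full differentiation to the right yields at most $1-\frac{\delta}{\kappa}<\frac12$. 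The symmetric argument handles the left firm.

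\textbf{Main obstacle.} I expect the delicate point to be the full-differentiation deviation in the existence direction, rather than any of the edge computations. The difficulty is that the full-differentiation payoff $\frac{\delta}{\kappa}$ strictly exceeds the equilibrium payoff $\frac12$, so it cannot be excluded by a crude upper bound; the resolution is geometric, namely that for $\delta>\frac{\kappa}{2}$ a non-overlapping attraction zone of width $2\delta$ cannot be fitted inside $[-\kappa,\kappa]$ alongside the rival, so the support constraint binds strictly before full differentiation is reached and the best attainable non-overlapping payoff drops to $1-\frac{\delta}{\kappa}<\frac12$. Making this trade-off quantitative, and noting that it is the flatness of $f$ together with the finite support width $2\kappa$, not any log-concavity estimate, that produces the threshold $\delta=\frac{\kappa}{2}$, is the crux of the argument.
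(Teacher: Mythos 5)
Your proof is correct and follows essentially the same route as the paper's: restrict to the support via Lemma~\ref{le:support}, use the piecewise-linear payoff $q_2=\frac{1}{2\kappa}\left[\min(\kappa,x_2+\delta)-\frac{x_1+x_2}{2}\right]$ to show local optimality forces $x_2+\delta=\kappa$ and $x_1-\delta=-\kappa$, and obtain the window $\frac12<\frac{\delta}{\kappa}<1$ from requiring $0<2(\kappa-\delta)<2\delta$. The only difference is that you spell out the global deviation check (convergence, leapfrogging, full differentiation) that the paper compresses into ``one may easily check,'' and your case analysis there is accurate.
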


\begin{proof}
\textbf{of Lemma \ref{le:unif_partialdiff}}\newline
By Lemma \ref{le:support}, one can restrict attention to $\left(
x_{1},x_{2}\right) $ such that $-\kappa \leq x_{1}<x_{2}<x_{1}+2\delta $ and 
$x_{2}\leq \kappa $. In that case, $q_{2}\left( x_{1},x_{2}\right) =\frac{1}{%
2\kappa }\left[ \min (\kappa ,x_{2}+\delta )-\frac{x_{1}+x_{2}}{2}\right] $.
The fact that firm 2 cannot increase its payoff by deviating slightly to the
left implies that $x_{2}+\delta \geq \kappa $. The fact that firm 2 cannot
increase its payoff by deviating slightly to the right implies that $%
x_{2}+\delta \leq \kappa $. It must therefore be the case that $x_{2}=\kappa
-\delta $. Similarly, it must be the case that $x_{1}=-\kappa +\delta $. The
distance between firm 1 and 2 has to be $2\kappa -2\delta $. Since we have
imposed that this distance should be positive and less than $2\delta $, on
gets the following necessary condition: $\frac{1}{2}<\frac{\delta }{\kappa }%
<1$\textit{.}\newline
One may easily check that if this condition holds, $\left( \delta -\kappa
,\kappa -\delta \right) $ is an equilibrium.
\end{proof}

\begin{lemma}
\label{le:unif_00}~~\newline
$\left( 0,0\right) $\textit{\ is an equilibrium if and only if }$\delta \geq
\kappa $\textit{.}
\end{lemma}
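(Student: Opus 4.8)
The plan is to verify the equilibrium condition for the profile $(0,0)$ by direct computation, exploiting the fact that on its support the uniform density equals the constant $\tfrac{1}{2\kappa}$, so every relevant mass is simply the length of an interval and no derivative argument (as used in the log-concave case) is needed. I would first record the payoff at the tie: when both firms sit at $0$ their common attraction zone is $[-\delta,\delta]$, so $q_2(0,0)=\tfrac12\int_{-\delta}^{\delta}f(t)\,dt$, which equals $\tfrac12$ when $\delta\geq\kappa$ (the zone then covers the whole support) and equals $\tfrac{\delta}{2\kappa}$ when $\delta<\kappa$.

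Next, keeping firm $1$ at $0$, I would write firm $2$'s payoff for a deviation to $x_2>0$; by the symmetry of $f$ it suffices to treat $x_2>0$. The consumers captured by firm $2$ are those closer to $x_2$ than to $0$ (i.e.\ $t>x_2/2$) and within distance $\delta$ of $x_2$ and in the support, which for $0<x_2<2\kappa$ yields
\begin{equation*}
q_2(0,x_2)=\frac{1}{2\kappa}\Big[\min(\kappa,\,x_2+\delta)-\frac{x_2}{2}\Big],
\end{equation*}
while $q_2(0,x_2)=0$ for $x_2\geq 2\kappa$ (the capture interval is then empty). The two regimes now split cleanly. If $\delta\geq\kappa$, then $x_2+\delta>\kappa$ for every $x_2>0$, so $q_2(0,x_2)=\tfrac12-\tfrac{x_2}{4\kappa}<\tfrac12=q_2(0,0)$ on $(0,2\kappa)$ and $q_2(0,x_2)=0$ beyond; hence no rightward deviation is profitable, and by symmetry neither is a leftward one, so $(0,0)$ is an equilibrium. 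If $\delta<\kappa$, I would exhibit a single profitable deviation: for any $x_2\in(0,\kappa-\delta)$ one has $x_2+\delta<\kappa$, so $q_2(0,x_2)=\tfrac{\delta}{2\kappa}+\tfrac{x_2}{4\kappa}>\tfrac{\delta}{2\kappa}=q_2(0,0)$, showing $(0,0)$ is not an equilibrium. Combining the two directions gives the claimed equivalence.

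The point requiring care is that $q_2(0,\cdot)$ is discontinuous at $x_2=0$: the coincident-location tie awards firm $2$ only half of the overlap, whereas an arbitrarily small move to the right lets it seize the entire right portion of the zone. For this reason the argument must compare the \emph{deviation payoff as a function of $x_2$} against the tie payoff, rather than evaluate a derivative at $0$. Once that is done, the verification that $(0,0)$ is an equilibrium when $\delta\geq\kappa$ also requires checking that no non-local deviation beats $\tfrac12$; this is immediate from the explicit formula, since $q_2(0,x_2)=\tfrac12-\tfrac{x_2}{4\kappa}$ is strictly decreasing on $(0,2\kappa)$ and vanishes for $x_2\geq 2\kappa$, so its supremum $\tfrac12$ over $x_2>0$ is only approached in the limit and never exceeded.
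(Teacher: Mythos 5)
Your proposal is correct and follows essentially the same route as the paper's proof: both compute $q_2(0,x_2)$ explicitly under the uniform density, show it is $\tfrac12-\tfrac{x_2}{4\kappa}<q_2(0,0)$ for all $x_2>0$ when $\delta\geq\kappa$, and exhibit a small profitable rightward deviation when $\delta<\kappa$. The only difference is your added (correct) remark about the discontinuity of $q_2(0,\cdot)$ at $x_2=0$, which the paper leaves implicit.
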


\begin{proof}
\textbf{of Lemma \ref{le:unif_00}}\newline
Let us first show that if $\delta \geq \kappa $, $\left( 0,0\right) $\textit{%
\ }is an equilibrium. Assume that $\delta \geq \kappa $. Note that $%
q_{1}\left( 0,0\right) =q_{2}\left( 0,0\right) =\frac{1}{2}$. And note that 
\begin{eqnarray*}
q_{2}\left( 0,x_{2}\right) &=&\frac{1}{2\kappa }\left[ \kappa -\frac{x_{2}}{2%
}\right] =\frac{1}{2}-\frac{x_{2}}{4\kappa }\text{ if }x_{2}\leq 2\kappa \\
&=&0\text{ otherwise.}
\end{eqnarray*}%
This proves that $\forall x_{2}>0$, $q_{2}\left( 0,x_{2}\right) <q_{2}\left(
0,0\right) $. Therefore, $\left( 0,0\right) $\textit{\ }is an equilibrium.%
\newline
Let us now complete the proof by showing that is \textit{if }$\delta <\kappa 
$\textit{, }$\left( 0,0\right) $\textit{\ }is not an equilibrium. If $\delta
<\kappa $%
\begin{equation*}
q_{2}\left( 0,0\right) =\frac{\delta }{2 \kappa }
\end{equation*}%
and for $0<\varepsilon <\kappa -\delta $, 
\begin{equation*}
q_{2}\left( 0,\varepsilon \right) =\frac{1}{2\kappa }\left[ \varepsilon
+\delta -\frac{\varepsilon }{2}\right] =\frac{\delta }{2 \kappa }+\frac{%
\varepsilon }{4\kappa }>q_{2}\left( 0,0\right) \text{.}
\end{equation*}%
This completes the proof.
\end{proof}

\begin{lemma}
\label{le:unif_nodiff}~~\newline
\textit{There is no equilibrium where the firms choose the same location if
this location is different from }$0.$
\end{lemma}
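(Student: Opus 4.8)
The plan is to show directly that, for any $a\neq 0$, a firm can strictly raise its payoff by relocating infinitesimally toward the center, so that $(a,a)$ cannot be a Nash equilibrium. By the symmetry of the uniform density around $0$ it suffices to treat $a>0$, and by Lemma~\ref{le:support} every equilibrium location lies in $[-\kappa,\kappa]$, so I may restrict to $a\in(0,\kappa]$. At the profile $(a,a)$ the two firms tie, so each serves half of its attraction zone truncated to the support; that is, each earns $\tfrac12 M(a)$ where $M(a)=\tfrac{1}{2\kappa}\big(\min(a+\delta,\kappa)-\max(a-\delta,-\kappa)\big)$ is the mass of consumers within distance $\delta$. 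I would then compare this with the payoff of the firm that moves to $a-\varepsilon$ for small $\varepsilon>0$ (a step to the left, toward $0$): by the closest-firm rule it captures exactly the consumers in $[\max(a-\varepsilon-\delta,-\kappa),\,a-\tfrac{\varepsilon}{2}]$, namely the left part of its zone up to the midpoint between the two firms.

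The computation splits according to whether the left end of the zone meets the support boundary. First I would treat $a-\delta>-\kappa$: for small $\varepsilon$ the mover's zone is not truncated on the left, so it captures a set of length $\delta+\tfrac{\varepsilon}{2}$ and earns $\tfrac{1}{2\kappa}(\delta+\tfrac{\varepsilon}{2})$. Since $\min(a+\delta,\kappa)\le a+\delta$ yields $M(a)\le\tfrac{\delta}{\kappa}$, the incumbent split $\tfrac12 M(a)$ is at most $\tfrac{\delta}{2\kappa}$, strictly below $\tfrac{1}{2\kappa}(\delta+\tfrac{\varepsilon}{2})$, so the move pays. Second, in the case $a-\delta\le-\kappa$, I would note that $a>0$ forces $\delta\ge a+\kappa$ and hence $a+\delta\ge 2a+\kappa>\kappa$, so the zone already covers the whole support, $M(a)=1$, and each firm gets exactly $\tfrac12$ at $(a,a)$. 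The mover's left end is now pinned at $-\kappa$, so it earns $\tfrac{1}{2\kappa}(a+\kappa-\tfrac{\varepsilon}{2})$, which exceeds $\tfrac12$ by $\tfrac{a-\varepsilon/2}{2\kappa}>0$ for $\varepsilon<2a$. In both cases the leftward deviation is strictly profitable, and the mirror-image rightward deviation disposes of $a\in[-\kappa,0)$.

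The only real subtlety is the boundary bookkeeping: unlike the densities in $\mathcal{D}$, the uniform density is flat and then drops to $0$ at $\pm\kappa$, so the payoff formula changes form whenever the attraction zone crosses a support edge, and a naive attempt invites a tedious multi-way case split. The feature that resolves this cleanly is that truncation can only shrink the shared zone: the single inequality $\min(a+\delta,\kappa)\le a+\delta$ caps the incumbent split at $\tfrac{\delta}{2\kappa}$, while the relocating firm always secures at least $\tfrac{\delta}{2\kappa}$ plus a strictly positive increment. Geometrically, for $a>0$ the truncated zone has its midpoint at or to the left of $a$, so half of it is worth no more than the interval $[\,\max(a-\delta,-\kappa),a\,]$, which the mover grabs entirely on its own. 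I expect organizing the argument around this one inequality — rather than around first-order conditions, which degenerate in the interior case where the benefit comes only from the first-order $\tfrac{\varepsilon}{2}$ term — to be the key step, and the main thing to get right.
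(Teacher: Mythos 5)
Your argument is correct, and it takes a genuinely different route from the paper's. The paper also starts from the small leftward deviation $x\mapsto x-\varepsilon$, but it only extracts the \emph{limiting} necessary condition as $\varepsilon\to 0^{+}$, namely $\min(\delta,x+\kappa)\le\min(\delta,\kappa-x)$; this condition degenerates to an equality precisely in the interior case $x\le\kappa-\delta$ (where the attraction zone is untruncated and symmetric about $x$), so the paper needs a second, non-local deviation --- jumping to the mirror location $-x$ --- to finish that case. You instead keep the first-order term: the mover at $a-\varepsilon$ captures $[\,\max(a-\varepsilon-\delta,-\kappa),\,a-\tfrac{\varepsilon}{2}\,]$ and hence earns $\tfrac{1}{2\kappa}\bigl(\delta+\tfrac{\varepsilon}{2}\bigr)$ in the untruncated case, strictly above the tied payoff $\tfrac12 M(a)\le\tfrac{\delta}{2\kappa}$, while in the truncated case ($a-\delta\le-\kappa$) the gain is already zeroth-order. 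The single inequality $\min(a+\delta,\kappa)\le a+\delta$ does collapse the case analysis as you claim, and both of your cases check out (including the strictness and the verification that the mover's capture set stays inside the support because $a\le\kappa$ by Lemma~\ref{le:support}). What your approach buys is a one-deviation proof that avoids the paper's chain of $\min/\max$ manipulations and its second computation of $q_1(-x,x)$ (which in the printed version contains a typo); what the paper's version buys is that its intermediate conclusion $x\le\kappa-\delta$ is reusable information about where convergent profiles could in principle survive local deviations. Either way the lemma holds.
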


\begin{proof}
\textbf{of Lemma \ref{le:unif_nodiff}}\newline
Assume that $\left( x_{1},x_{2}\right) $ is an equilibrium such that $%
x_{1}=x_{2}=x>0.$\newline
By Lemma \ref{le:support}, we know that necessarily, $0<x\leq \kappa $.%
\newline
Then: 
\begin{equation*}
q_{1}\left( x,x\right) =\frac{1}{2}\times \frac{1}{2\kappa }\left[ \min
\left( x+\delta ,\kappa \right) -\max \left( x-\delta ,-\kappa \right) %
\right]
\end{equation*}%
and for $\varepsilon >0$ small enough:%
\begin{equation*}
q_{1}\left( x-\varepsilon ,x\right) =\frac{1}{2\kappa }\left[ x-\frac{%
\varepsilon }{2}-\max \left( x-\varepsilon -\delta ,-\kappa \right) \right] .
\end{equation*}%
For $\left( x,x\right) $ to be an equilibrium, it is therefore necessary
that 
\begin{equation*}
x-\max \left( x-\delta ,-\kappa \right) \leq \frac{1}{2}\times \left[ \min
\left( x+\delta ,\kappa \right) -x+x-\max \left( x-\delta ,-\kappa \right) %
\right] ,
\end{equation*}%
which is equivalent to%
\begin{eqnarray*}
x-\max \left( x-\delta ,-\kappa \right) &\leq &\min \left( x+\delta ,\kappa
\right) -x \\
x+\min \left( -x+\delta ,\kappa \right) &\leq &\min \left( x+\delta ,\kappa
\right) -x \\
\min \left( \delta ,x+\kappa \right) &\leq &\min \left( \delta ,\kappa
-x\right)
\end{eqnarray*}%
If $\min \left( \delta ,x+\kappa \right) =x+\kappa $, this implies $x+\kappa
\leq \kappa -x$, which is impossible since by assumption $x>0$.\newline
Therefore $\min \left( \delta ,x+\kappa \right) =\delta $ and $x\leq \kappa
-\delta .$ Note that since $x>0$, this implies that $\delta <\kappa .$%
\newline
If $x\leq \kappa -\delta ,$ then $\min \left( x+\delta ,\kappa \right)
=x+\delta $ and $\max \left( x-\delta ,-\kappa \right) =x-\delta $,
therefore: 
\begin{equation*}
q_{1}\left( x,x\right) =\frac{1}{2}\times \frac{\delta }{\kappa }.
\end{equation*}%
But then note that 
\begin{eqnarray*}
q_{1}\left( -x,x\right) &=&\frac{1}{2\kappa }\left[ \min \left( -x+\delta
,0\right) -(-x+\delta )\right] \\
&=&\frac{1}{2}\times \frac{\delta }{\kappa }+\frac{1}{2\kappa }x+\min \left(
-x+\delta ,0\right) .
\end{eqnarray*}%
Since $x>0$, $q_{1}\left( -x,x\right) >q_{1}\left( x,x\right) $ and $\left(
x,x\right) $ is not an equilibrium.
\end{proof}

\bibliographystyle{plainnat}
\bibliography{bib_unitdemand}

\begin{thebibliography}{18}
\providecommand{\natexlab}[1]{#1}
\providecommand{\url}[1]{\texttt{#1}}
\expandafter\ifx\csname urlstyle\endcsname\relax
  \providecommand{\doi}[1]{doi: #1}\else
  \providecommand{\doi}{doi: \begingroup \urlstyle{rm}\Url}\fi

\bibitem[Bester et~al.(1996)Bester, De~Palma, Leininger, Thomas, and
  Von~Thadden]{bester1996noncooperative}
Helmut Bester, Andr\'e De~Palma, Wolfgang Leininger, Jonathan Thomas, and
  Ernst-Ludwig Von~Thadden.
\newblock \textquotedblleft{A} noncooperative analysis of {H}otelling's
  location game\textquotedblright.
\newblock \emph{Games and Economic Behavior}, 12\penalty0 (2):\penalty0
  165--186, 1996.

\bibitem[Black(1948)]{black1948rationale}
Duncan Black.
\newblock \textquotedblleft{O}n the rationale of group
  decision-making\textquotedblright.
\newblock \emph{Journal of Political Economy}, 56\penalty0 (1):\penalty0
  23--34, 1948.

\bibitem[d'Aspremont et~al.(1979)d'Aspremont, Gabszewicz, and
  Thisse]{d1979hotelling}
Claude d'Aspremont, Jean~Jaskold Gabszewicz, and Jacques-Fran\c{c}ois Thisse.
\newblock \textquotedblleft{O}n {H}otelling's stability in
  competition\textquotedblright.
\newblock \emph{Econometrica}, 47.5:\penalty0 1145--1150, 1979.

\bibitem[De~Palma et~al.(1985)De~Palma, Ginsburgh, Papageorgiou, and
  Thisse]{de1985principle}
Andr\'e De~Palma, Victor Ginsburgh, Yorgos Papageorgiou, and
  Jacques-Fran\c{c}ois Thisse.
\newblock \textquotedblleft{T}he principle of minimum differentiation holds
  under sufficient heterogeneity \textquotedblright.
\newblock \emph{Econometrica}, 53.4:\penalty0 767--781, 1985.

\bibitem[Downs(1957)]{tirole1988theory}
Anthony Downs.
\newblock \emph{\textquotedblleft {A}n Economic Theory of
  Democracy\textquotedblright}.
\newblock New York: Harper, 1957.

\bibitem[Economides(1984)]{economides1984principle}
Nicholas Economides.
\newblock \textquotedblleft{T}he principle of minimum differentiation
  revisited\textquotedblright.
\newblock \emph{European Economic Review}, 24\penalty0 (3):\penalty0 345--368,
  1984.

\bibitem[Economides(1986)]{economides1986minimal}
Nicholas Economides.
\newblock \textquotedblleft{M}inimal and maximal product differentiation in
  {H}otelling’s duopoly\textquotedblright.
\newblock \emph{Economics Letters}, 21\penalty0 (1):\penalty0 67--71, 1986.

\bibitem[Feldman et~al.(2016)Feldman, Fiat, and
  Obraztsova]{feldman2016variations}
Michal Feldman, Amos Fiat, and Svetlana Obraztsova.
\newblock \textquotedblleft{V}ariations on the {H}otelling-{D}owns
  model\textquotedblright.
\newblock In \emph{Thirtieth AAAI Conference on Artificial Intelligence}, pages
  496--501, 2016.

\bibitem[Fournier(2019)]{fournier2016general}
Ga{\"e}tan Fournier.
\newblock \textquotedblleft{G}eneral distribution of consumers in pure
  {H}otelling games\textquotedblright.
\newblock \emph{International Journal of Game Theory}, 48\penalty0
  (1):\penalty0 33--59, 2019.

\bibitem[Hinich and Ordeshook(1970)]{hinich1970plurality}
Melvin Hinich and Peter Ordeshook.
\newblock \textquotedblleft{P}lurality maximization vs vote maximization: A
  spatial analysis with variable participation\textquotedblright.
\newblock \emph{American Political Science Review}, 64\penalty0 (3):\penalty0
  772--791, 1970.

\bibitem[Hotelling(1929)]{hotelling1929stability}
Harold Hotelling.
\newblock \textquotedblleft{S}tability in competition\textquotedblright.
\newblock \emph{The Economic Journal}, 39\penalty0 (153):\penalty0 41--57,
  1929.

\bibitem[Lerner and Singer(1937)]{lerner1937some}
Abba Lerner and Hans~Werner Singer.
\newblock \textquotedblleft{S}ome notes on duopoly and spatial
  competition\textquotedblright.
\newblock \emph{Journal of Political Economy}, 45\penalty0 (2):\penalty0
  145--186, 1937.

\bibitem[Meagher and Zauner(2004)]{meagher2004product}
Kieron Meagher and Klaus Zauner.
\newblock \textquotedblleft{P}roduct differentiation and location decisions
  under demand uncertainty\textquotedblright.
\newblock \emph{Journal of Economic Theory}, 117\penalty0 (2):\penalty0
  201--216, 2004.

\bibitem[Osborne and Pitchik(1987)]{osborne1987equilibrium}
Martin Osborne and Carolyn Pitchik.
\newblock \textquotedblleft{E}quilibrium in {H}otelling's model of spatial
  competition\textquotedblright.
\newblock \emph{Econometrica}, 55.4:\penalty0 911--922, 1987.

\bibitem[Shen and Wang(2017)]{shen2017hotelling}
Weiran Shen and Zihe Wang.
\newblock \textquotedblleft{H}otelling-{D}owns model with limited
  attraction\textquotedblright.
\newblock In \emph{Proceedings of the 16th Conference on Autonomous Agents and
  MultiAgent Systems}, pages 660--668. International Foundation for Autonomous
  Agents and Multiagent Systems, 2017.

\bibitem[Smithies(1941)]{smithies1941optimum}
Arthur Smithies.
\newblock \textquotedblleft{O}ptimum location in spatial
  competition\textquotedblright.
\newblock \emph{Journal of Political Economy}, 49\penalty0 (3):\penalty0
  423--439, 1941.

\bibitem[Tirole(1988)]{downs1957economic}
Jean Tirole.
\newblock \emph{\textquotedblleft{T}he theory of industrial
  organization\textquotedblright}.
\newblock MIT press, 1988.

\bibitem[Xefteris et~al.(2017)Xefteris, Laussel, and
  Le~Breton]{xefteris2017simple}
Dimitrios Xefteris, Didier Laussel, and Michel Le~Breton.
\newblock \textquotedblleft{S}imple centrifugal incentives in spatial
  competition\textquotedblright.
\newblock \emph{International Journal of Game Theory}, 46\penalty0
  (2):\penalty0 357--381, 2017.

\end{thebibliography}

\end{document}